\newcommand{\ass}{\mathrm{a}}
\newcommand{\cov}{\mathrm{cov}}
\providecommand{\F}{\mathfrak}
\newcommand{\CEsign}{\Phi}
\newcommand{\CE}[2]{\CEsign_{#1|#2}}
\newcommand{\aCE}[2]{\widetilde{\CEsign}_{#1|#2}}
\newcommand{\YE}{Z}
\newcommand{\cy}{{\hat{y}}}
\newcommand{\sY}{\mathcal{Y}}
\newcommand{\q}{q}
\newcommand{\Q}{Q}
\newcommand{\ch}[1]{\mathbbm{1}_{\!#1}} 
\newcommand{\dual}[2]{\langle #1,#2 \rangle}
\newcommand{\figscalepar}{0.45}
\newcommand{\Yq}{Y_Q}
\newcommand{\Z}{Z}
\newcommand{\distN}{\mathcal{N}}
\newcommand{\sep}{\,|\,}
\newcommand{\E}[1]{\mathbb{E}\bigl[#1\bigr]}
\renewcommand{\P}{\mathbb{P}}
\newcommand{\sQ}{\mathcal{Q}}
\newcommand{\sL}{\mathrm{L}}
\newcommand{\sX}{\mathcal{X}}
\newcommand{\sB}{\F{B}}
\newcommand{\Adel}{A_{\del}}
\newcommand{\Ay}{A_{\cy}}
\newcommand{\skipifemptyarg}[1]{\ifthenelse{\isempty{#1}}{}{\left[#1\right]}}
\newcommand{\skipifscalar}[1]{\ifthenelse{\isempty{#1}}{}{;#1}}
\newcommand{\bs}[1]{\boldsymbol{#1}}
\newcommand{\scal}[2]{\bigl(#1,#2\bigr)}
\newcommand{\set}[1]{\mathbb{#1}} 
\newcommand{\V}[1]{\bs{#1}} 
\newcommand{\ol}[1]{\overline{#1}}
\newcommand{\alp}{\ensuremath{\alpha}}
\newcommand{\del}{\ensuremath{\delta}}
\newcommand{\sR}{\set{R}}
\DeclareMathOperator*{\argmin}{arg\,min}
\newcommand{\meas}[1]{\lambda(#1)}
\newcommand{\D}[1]{\,{\mathrm{d}}#1}
\theoremstyle{plain}
\newtheorem{theorem}{Theorem}
\newtheorem{definition}[theorem]{Definition}
\newtheorem{lemma}[theorem]{Lemma}
\newtheorem{corollary}[theorem]{Corollary}
\newtheorem{remark}[theorem]{Remark}
\newtheorem{proposition}[theorem]{Proposition}
\newtheorem{problem}[theorem]{Problem}
\newtheorem{example}[theorem]{Example}
\title{Accurate computation of conditional expectation for highly non-linear problems
}
\author[1]{Jaroslav~Vondřejc}
\author[1]{Hermann G. Matthies}
\affil[1]{Technische Universit\"{a}t Braunschweig, Institute of Scientific Computing, Mühlenpfordstra{\ss}e~23, 38106 Braunschweig, Germany, \href{mailto:j.vondrejc@tu-bs.de}{j.vondrejc@tu-bs.de}, \href{mailto:vondrejc@gmail.com}{vondrejc@gmail.com}, \href{mailto:wire@tu-bs.de}{wire@tu-bs.de}}
\date{\today}
\begin{document}
\maketitle

\begin{abstract}
This paper focuses on inverse problems to identify parameters by incorporating information from measurements.
These generally ill-posed problems are formulated here in a probabilistic setting based on Bayes's theorem because it leads to a unique solution of the updated distribution of parameters.
Many approaches build on Bayesian updating in terms of probability measures or their densities.
However, the uncertainty propagation problems and their discretisation within the stochastic Galerkin or collocation method are naturally formulated for random vectors which calls for updating of random variables, i.e.\ a filter.
Such filters typically build on some approximation to conditional expectation (CE).
Specifically, the approximation of the CE with affine functions leads to the familiar Kalman filter which works best on linear or close to linear problems only.
Our approach builds on a reformulation, which allows to localise the operator of the CE to the point of measured value.
The resulting conditioned expectation (CdE) predicts correctly the quantities of interest, e.g.\ conditioned mean and covariance, even for general highly non-linear problems.
The novel CdE allows straight-forward numerical integration; particularly, the approximated covariance matrix is always positive definite for integration rules with positive weights.
The theoretical results are confirmed by numerical examples.
\\
\textbf{Keywords:} inverse problem, Bayesian updating, conditional expectation, conditional probability, conditioned quantities
\end{abstract}
\tableofcontents
\section{Introduction}
\emph{Inverse problems.}
This paper is focused on the identification of parameters, given by a random variable $\Q:\Omega\rightarrow\sQ$, using measurements. This problem has an enormous importance in engineering practice; unfortunately, the parameters usually cannot be observed directly but only indirectly as a response of some system, given by an \emph{observation operator}
$\Yq:\sQ\rightarrow\sY$. It is therefore called an \emph{inverse problem} \cite{Tarantola2005} contrary to the forward problem $\Yq$.
Moreover, the measurements are also polluted by some error distributed in accordance to a random variable $E:\Omega\rightarrow\sY$. The \emph{measured value} $\cy\in\sY$ is then the sample of observation operator
\begin{align}
\label{eq:inverse_problem}
\cy = \YE(\omega) =\Yq(Q(\omega)) + E(\omega),
\end{align}
where we assume an additive error; for concrete problem see Example~\ref{ex:1}.
Inverse problems are usually ill-posed because an observation can lead to many parameters satisfying \eqref{eq:inverse_problem}, the parameters $q$ can be very sensitive to observation $\cy$, or even the observation $\cy$ can be out of the range of the observation operator $\Yq$.
This mathematically manifests itself in the fact that usually $\Yq$ is not invertible.

\emph{Bayesian updating.}
The probabilistic description of inverse problems is an elegant approach that relies on Bayes's theorem \cite{Ernst2014,Rao2006book,Stuart2010}.
Modelling the \emph{prior} parameters $q=Q(\omega)$ as uncertain, the ill-posed inverse problem turns out to be well-posed with a unique (stochastic) solution.
Therefore the \emph{updated (or assimilated)} parameters can describe all possible parameters (along with their probabilities) satisfying the inverse problem \eqref{eq:inverse_problem} in a stochastic sense.
Many Bayesian approaches to inverse problems employ the updating in terms of probability measures or corresponding densities. 
However, the response of the system (forward problem), from which the measurements are taken, is typically expressed as a random variable (RV), which makes repeated updating with probability densities a non-straightforward computation.

\emph{Conditional expectation.}
The Bayesian updating in terms of random variables builds on the concept of \emph{conditional expectation} (CE), which is defined as another RV.
However, its approximation is a difficult task.
The approximation of a CE with affine (linear) mappings leads to the familiar \emph{Kalman filter} (KF) \cite{Kalman1960}. There are also many sampling variants such as the Ensemble KF \cite{Evensen1994,Ernst2015}, Extended KF \cite{Julier2004}, or accelerated KF employing a surrogate model for the forward problem \cite{Li2009}.
There is also a sampling-free variant of the filter called polynomial chaos Gauss-Markov-Kalman filter \cite{Blanchard2010,Pajonk2012,Rosic2013,Rosic2012} when all random variables are expressed in terms of polynomial chaos expansion (PCE) and only their coefficients are updated.
However, all the variants provide reliable results that match the conditional distribution only for linear or close to linear forward problems.
The distribution of the filtered RV is duly exact for Gaussian linear problems, and deviates from it more and more as this condition is not met.
A better outcome can be achieved with a discretisation of the CE by higher order polynomials \cite{Matthies2015,Matthies2016}, however its approximation properties are still poor for general non-linear problems, see Section~\ref{sec:filtering} for details.

\emph{Novelty.}
The conditional expectation $\CE{\Q}{\YE}$ is a map (from measurement to parameter space), which is generally very difficult to approximate.
For calculating the posterior quantities one needs only a value of the CE at the point of observation $\cy$.
Here we focused on the novel evaluation of this value $\CE{\Q}{\YE}(\cy)$ after the observation $\cy$, called here conditioned expectation (CdE), using the variational formulation of CE.
It overcomes the limitations of previous approaches and leads to an accurate and more robust method presented in Section~\ref{sec:simple-conditional-expectation-of-random-fields}. 

\emph{Organisation of the paper.}
The target of this paper is to provide, besides new results, a variational view on CE from the computational perspective.
Because of various notations and approaches to CE, this paper is designed to be self-contained, including many proofs or their outlines that can help in understanding.
Particularly the existing theory about CE, conditional probabilities, and conditional probability densities is described in Appendix~\ref{sec:conditional-expectation}.
Then the main text contains a brief version in Section~\ref{sec:inverse_via_Bayes} which is accompanied by a model inverse problem in a Bayesian setting and also by existing filters, e.g.\ the Kalman filter.
The novel development and numerical approximation of conditioned expectation is covered in Section~\ref{sec:simple-conditional-expectation-of-random-fields}.
The results are confirmed by numerical examples in Section~\ref{sec:numerical-examples}, and their summary can be found in the conclusion in Section~\ref{sec:conclusion}.

\section{Inverse problems via Bayesian updating}
\label{sec:inverse_via_Bayes}
This section is focused on inverse problems using Bayesian updating.
The basic notation and definitions of random variables and probability distributions in Section~\ref{sec:random-variables-and-probability-distributions} is followed by the description of a model problem in Section~\ref{sec:model-problem}.
Bayesian updating for probability density functions (PDF) and conditional expectation is summarised in Sections \ref{sec:bayes-theorem} and~\ref{sec:conditional-expectation_short};
the full version of this topic is covered in Appendix~\ref{sec:conditional-expectation}.
The last Section~\ref{sec:filtering} of this part is devoted to Bayesian filters along with critical discussion about its approximation.

\subsection{Random variables and probability distributions}\label{sec:random-variables-and-probability-distributions}
The definitions and notations about random variables and probability distributions are introduced here.
Let $(\Omega,\F{S},\P)$ be a probability space with sample space $\Omega$, set of events $\F{S}$ as a $\sigma$-algebra, and probability measure $\P:\F{S}\rightarrow[0,1]$.
The parameter space $\sQ$ (and also the measurement space $\sY$) will be assumed to be a finite dimensional vector space with a scalar product $\scal{u}{v}_\sQ$ or $u\cdot v$ and induced norm $\|u\|_\sQ=\sqrt{u\cdot u}$.
It is considered to be a measurable space $(\sQ,\sB_\sQ)$ with Borel $\sigma$-algebra $\sB_\sQ$ (or simply $\sB$) generated by the open sets in $\sQ$.

A measurable function $X:\Omega\rightarrow\sQ$ is called a random variable (RV). 
The function space $\sL^p(\Omega,\F{S},\P;\sQ)$ (or simply $\sL^p(\Omega;\sQ)$ or $\sL^p$) is the collection of (equivalence classes of) RVs with finite norm $\|X\|_{\sL^p}^p=\int_\Omega{\|X(\omega)\|_Q^p}<\infty$. The bilinear form $\dual{\cdot}{\cdot}_{L^p\times L^q}$ on $L^p\times L^q$ for $p\in[1,\infty)$ and $q=\frac{p}{p-1}$ is defined as a duality pairing
$\dual{f}{g}_{L^p\times L^q}=\int_\Omega f(\omega) g(\omega)\P(\omega)$, and it becomes a scalar product $\dual{f}{g}_{L^2\times L^2}=\scal{f}{g}_{L^2}$ for $p=2$, and $L^2$ is a Hilbert space.
The expectation $\mathbb{E}:\sL^p(\Omega,\F{S},\P;\sQ)\rightarrow\sQ$ is denoted as
$\E{X} = \ol{X} = \int_\Omega X(\omega) \P(\D{\omega})$.
The characteristic function $\ch{A}$ of a set $A\subset\Omega$ is a function that equals $1$ on $A$ and zero otherwise.
The composition of two random variables $Y\circ X$ is defined as $Y\circ X(\omega)=Y[X(\omega)]$.

\subsection{The model problem}
\label{sec:model-problem}
The model problem focuses on the identification of parameters $\q\in\sQ$, which are initially described with a distribution determined by a random vector $\Q\in \sL^1(\Xi,\F{S}_\Xi,\P_\Xi;\sQ)$ with $\q=\Q(\xi)$, or by a probability density function (PDF) $f_\Q$.
Unfortunately, the parameters $\q$ can usually not be observed directly but only a response of it given by the \emph{observation operator} $\Yq:\sQ\rightarrow\sY$.
Here the observation space is typically a finite dimensional vector space $\sR^m$, which corresponds to the information obtained from $m$ measurements.
The \emph{measurement} operator thus generates a measurement random vector $Y\in \sL^1(\Xi,\F{S}_\Xi,\P_\Xi;\sY)$ defined as
\begin{align}
\label{eq:observation}
Y(\xi) = \Yq(\Q(\xi)).
\end{align}
However, the measurement is typically influenced by a \emph{measurement error} $e=E(\theta)\in\sY$ with a distribution given by a random vector $E\in\sL^1(\Theta,\F{S}_\Theta,\P_\Theta;\sY)$ or probability density $f_E$, which has usually zero-mean and a Gaussian distribution.
The measurement \eqref{eq:observation} accompanied by the error establishes a random variable (\emph{measurement with error})
\begin{align}
\label{eq:measurement}
\YE(\xi,\theta) &=\Yq[Q(\xi)]+E(\theta) = Y(\xi)+E(\theta),
\end{align}
form the space $\sL^1(\Omega,\F{S},\P;\sY)$ with the product space $\Omega=\Xi\times\Theta$, product $\sigma$-algebra $\F{S}=\F{S}_\Xi\times\F{S}_\Theta$, and product measure $\P=\P_\Xi\times\P_\Theta$, which means that the error $E$ is independent of the parameter $\Q$. For reasons of simplicity we talk about $\YE$ as the \emph{measurement}.

The random variable $\Q$, defining the distribution of the parameters, is primarily defined on $\Xi$. We will inject that to $\Omega=\Xi\times\Theta$ by defining
$\hat{\Q}(\xi,\theta)=\Q(\xi)$; for notational simplicity we will use only one symbol $\Q$, so $\Q\in\sL^1(\Omega,\F{S},\P;\sQ)$ as well as $\Q\in\sL^1(\Xi,\F{S}_\Xi,\P_\Xi;\sQ)$.
The structure of the measure spaces and maps is summarised in the following diagram.
\begin{equation}
\label{diag:commutative_diag}
\begin{tikzcd}[column sep=5.em,row sep=3.em,every arrow/.append style={shift left}]
(\Omega,\F{S},\P)=\overbrace{(\Xi,\F{S}_\Xi,\P_\Xi)}^{\text{parameter dom.}}\times \overbrace{(\Theta,\F{S}_\Theta,\P_\Theta)}^{\text{error domain}}
\arrow{d}{\Q}
\arrow{dr}{\YE=\Yq\circ Q}
\arrow{r}{X}
&
\overbrace{(\sX,\sB_\sX)}^{\mathclap{\text{space of Q.o.I.}}}
\\
\underbrace{(\sQ,\sB_\sQ)}_{\mathclap{\text{parameter space of prior}}}
\arrow{r}{\Yq}
& \underbrace{(\sY,\sB_\sY)}_{\mathclap{\text{measurement space}}}
\arrow{l}{\CE{\Q}{\YE}}
\arrow{u}{\CE{X}{\YE}}  &
\end{tikzcd}
\end{equation}

The conditional expectations  $\CE{\Q}{\YE}:\sY\rightarrow\sQ$ of the parameter $Q$ and $\CE{X}{\YE}:\sY\rightarrow\sX$ of a general RV $X$ w.r.t.\ the measurement $\YE$ are discussed in Section~\ref{sec:conditional-expectation_short} and Appendix~\ref{sec:conditional-expectation}.

\begin{problem}
 \label{rem:problem}
 The goal is to calculate the quantities of interest (mean, covariance, etc.) of parameters $\q\in\sQ$ with the information from the measurement, given by the observed value $\cy\in\sY$ and observation error $E$.
 Particularly the vector $\cy$, usually from $\sY=\sR^m$ corresponding to $m$ measurements, is the value taken at the measurement site with the error given by $E$.
\end{problem}
\begin{example}[Identification of loads]
 \label{ex:1}
 As an example, one can consider identification of magnitude of heat source by measuring the response (temperature $u$) of the stationary heat transfer described by
 \begin{align}
 \label{eq:example1}
 \nabla\cdot \nabla u(x) = \sum_{i=1}^n q_if_i(x)\qquad\text{with some boundary conditions (possibly nonlinear)},
 \end{align}
 where $f_i$ are loads and $q_i$ are uncertain quantities for $i=1,\dotsc,n$.
 The information about the system is given by measuring the temperature at measurements points $x_i$ for $i\in\{1,\dotsc,m\}$, which corresponds to an observation operator $\Yq(q) = [u(x_1), u(x_2), \dotsc, u(x_m)]$ for $u$ obtained from model \eqref{eq:example1}.
\end{example}

\subsection{The conditional expectation}
\label{sec:conditional-expectation_short}

In a probabilistic setting, Problem~\ref{rem:problem} to estimate parameters from the measurement is based on Bayes's theorem. Mathematically, it is formulated for two events $A$ and $B$ as subsets of $\Omega$
\begin{align*}
\P(A|B) = \frac{\P(A\cap B)}{\P(B)}= \frac{\P(B|A)}{\P(B)}\P(A),
\end{align*}
where $\P(A)$ is the probability of event $A$, $\P(A\cap B)$ is a probability of simultaneous occurrence of events $A$ and $B$, and $\P(A|B)$ is a conditional probability, i.e.\ a likelihood of event $A$ given that event $B$ has occurred.
However, this is well-defined only when the probability of event $B$ is positive ($\P(B)>0$).

One has to overcome this limitation as well as the fact that Bayes's theorem estimates the posterior probability only w.r.t.\ a single observed event. However one needs to incorporate the new information of the whole set of events.
The proper approach is the conditional expectation w.r.t.\ to a random variable, which is covered in many monographs, e.g.\ \cite{Rao2006book,Durrett2010book}, and also in Appendix~\ref{sec:conditional-expectation}.

Here conditional expectation is summarised for a prior random variable $\Q\in \sL^1(\Omega,\F{S},\P;\sQ)$ with respect to a given measurement $\YE\in\sL^1(\Omega,\F{S},\P;\sY)$. 
However, there is a natural question what one can observe from measurement $\YE$ and how it can be mathematically formulated.
Assuming an event $B$ in $\sY$ has been observed, then the corresponding probability of that is expressed with the help of the push-forward measure
\begin{align*}
\P_{\YE}(B) = \P(\YE^{-1}(B))
\end{align*}
where $\YE^{-1}(B)=\{\omega\in\Omega\sep \YE(\omega)\in B\}$ is a preimage of the set $B$.
In other words, all events in parameter space $\Omega$ that lead to an event $B$ lie in the preimage $\YE^{-1}(B)$.
Therefore, the information from the measurement $\YE$ that can be obtained about the parameters is characterised by all preimages in $\Omega$ for all possible sets $B$, i.e.\ the $\sigma$-algebra generated by the measurement $\YE$ denoted as $\sigma(Y)$, see \eqref{eq:sigmaY} for a definition.

The conditional expectation of any random variable $X:\Omega\rightarrow\sX$, which could be a function of $Q$, w.r.t.\ the measurement $\YE\in\sL^1(\Omega,\F{S},\P;\sY)$ is then defined as a random variable $\CE{X}{\YE}:\sY\rightarrow\sX$ such that it satisfies the orthogonality condition
\begin{align}
\label{eq:CE_duality}
\dual{X-\CE{X}{\YE}\circ \YE}{W\circ \YE}_{\sL^1\times \sL^\infty} &= 0
\quad\forall W\in \sL^\infty(\sY,\sB_\sY,\P_Z;\sX).
\end{align}
The conditional expectation exists for all parameters $X$ in $\sL^1(\Omega,\F{S},\P;\sX)$, which can be proven by the Radon-Nikod\'{y}m theorem \cite{Rao2006book}, and is also unique as an element in $\sL^1(\sY,\sB,\P_Y;\sX)$ (i.e.\ it is unique up to the set of $\P_Y$-measure zero).
When the parameters $X$ have a finite variance, i.e.\ $X\in\sL^2(\Omega,\F{S},\P;\sX)$, the CE minimises the mean square error
\begin{align}
\label{eq:CE_min}
\CE{X}{\YE} = \argmin_{W\in\sL^2(\sY,\sB,\P_Y;\sX)} \|X-W\circ \YE\|^2_{\sL^2(\Omega,\F{S},\P;\sX)},
\end{align}
the variational equation of which is \eqref{eq:CE_duality} in the $\sL^2\times \sL^2$ duality.

\subsection{Bayesian updating using probability densities}\label{sec:bayes-theorem}
Bayesian updating using probability densities is presented here, for details see Appendix~\ref{sec:conditional-probability-and-distribution}.
Particularly taking an open set $A$ in the parameter space $\sQ$, the conditional probability distribution $\P_\Q[A|\YE]$ is defined as the conditional expectation w.r.t.\ a characteristic function of the preimage of $A$, i.e.
\begin{align*}
\P_\Q[A|\YE](\cy) = \CE{\ch{A}\circ Q}{\YE}(\cy).
\end{align*}
As it is a probability measure on the space $\sQ$, it can (in some cases) be expressed with a conditional probability density function $f_{\Q|\YE}:\sQ\rightarrow\sR$ as
$\P_\Q(A|\YE)(\cy) = \int_A f_{\Q|\YE}(q|\cy)\D{q}$.

When the conditional probabilities are expressed in terms of probability densities we obtain a variant of Bayesian updating
\begin{align*}
f_{\Q|\YE}(\q|\cy) = \frac{f_{\YE|\Q}(\cy|\q)}{Z_s}f_\Q(\q)\quad\text{for }\q\in\sQ
\end{align*}
where $f_\Q(\q)$ is the probability density of prior random variable $\Q$, $f_{\YE|\Q}(\cy|\q)$ is a likelihood, and the evidence $Z_s = \int_{\sQ} f_{\YE|\Q}(\cy|\q) f_\Q(\q) \D{\q}$
ensures that the whole conditional probability equals one, i.e.\ that $\int_\sQ f_{\Q|\YE}(\q|\cy)\D{\q}=1$. For the observation with additive error \eqref{eq:measurement}, the likelihood can be expressed in a special form
\begin{align}
\label{eq:likelihood}
f_{\YE|\Q}(\cy|\q) = f_{E}(\cy-\Yq(\q)),
\end{align}
for details see Lemma~\ref{lem:likelihood}.
Altogether we receive the Bayesian updating in the final form
\begin{align}
\label{eq:pdf_update}
f_{\Q|\YE}(\q|\cy) = \frac{f_{E}(\cy-\Yq(\q))}{Z_s}f_\Q(\q).
\end{align}
From PDFs one can calculate the mean and variance of the posterior (conditional) distribution
\begin{subequations}
 \label{eq:pdf_post}
 \begin{align}
 \label{eq:pdf_mean}
 \textrm{Mean}_{\Q|\YE}(\cy) &= \int_{\sQ}\q f_{\Q|\YE}(\q|\cy) \D{\q},
 \\
 \label{eq:pdf_cov}
 \textrm{Covar}_{\Q|\YE}(\cy) &= \int_{\sQ} \bigl[\q-\textrm{Mean}_{\Q|\YE}(\cy)\bigr]\otimes\bigl[\q-\textrm{Mean}_{\Q|\YE}(\cy)\bigr] f_{\Q|\YE}(\q|\cy) \D{\q}
 \end{align}
\end{subequations}
For a future comparison to the conditioned expectation to be developed, we stress here that the conditional mean and covariance depend on the observation value $\cy$.

\subsection{Filtering}\label{sec:filtering}
The whole aim of updating (filtering) the parameters is to obtain a posterior RV with updated (or assimilated) distribution w.r.t the measurement $\YE$ and observed value $\cy\in\sY$. Here several existing filters are discussed.

Certainly, the filter has to contain the conditional expectation $\CE{\Q}{\YE}(\cy)$, which predicts the posterior mean for observations $\cy$, and a zero-mean random variable $R$ determining the posterior distribution.
Naturally, the filter should be a function of the prior but also of the measurement; however, its effective determination remains an open problem.
One approach is based on the orthogonal decomposition of the prior RV $Q = \CE{Q}{\YE}\circ \YE + (Q-\CE{\Q}{\YE}\circ \YE)$ with zero-mean part $R:=(Q-\CE{\Q}{\YE}\circ \YE)$, which leads to a so-called CE mean filter
\begin{align}
\label{eq:filter_mean}
\Q_\text{a}(\xi,\theta)&=\CE{\Q}{\YE}(\cy) + \Q(\xi)-\CE{\Q}{\YE} \circ  \YE(\xi,\theta).
\end{align}
Although this is exact for the specific setting of normally distributed prior and linear observation $\YE$, it fails to predict correct distribution (except mean) for general problems for all possible observations $\cy$.
Particularly the covariance $C_{Q_aQ_a} = C_{RR} = \E{R\otimes R}$ and also higher moments of posterior RV $Q_a$ are determined by the zero-mean part $R$, which in \eqref{eq:filter_mean} is independent of the observation value $\cy$, see also Section~\ref{sec:numerical-example-of-existing-filters} with a numerical example.

Nevertheless the correct covariance can be obtained, similarly as the mean, from the conditional expectation
$C=\CE{\bar{Q}\otimes\bar{Q}}{\YE}(\cy)$ but with respect to the RV $\bar{Q}\otimes \bar{Q}$ for $\bar{Q} = Q-\CE{\Q}{\YE}(\cy)$.
It allows to scale the CE mean filter \eqref{eq:filter_mean} to obtain a filter which can have correct mean and covariance
\begin{align}
\label{eq:filter_cov}
Q_{\ass,\cov}(\xi,\theta) = \CE{\Q}{\YE}(\cy) + C^{1/2}C^{-1/2}_{RR}[\Q(\xi)-\CE{\Q}{\YE} \circ  \YE(\xi,\theta)],
\end{align}
where the inverse is meant in the sense of pseudo-inverse.
Still the filter cannot have correct distribution for general problems.
Actually any filter expressed as a mean plus zero-mean random variable $R=f(Q,\YE)$ depending on the prior and the measurement provides a filter with correct mean and covariance.
The novel filters and their approximations will be studied in following publications.

\subsubsection{Approximation of conditional expectation}\label{sec:approximation-of-conditional-expectation}
Although the filters \eqref{eq:filter_mean} and \eqref{eq:filter_cov} produce posterior RVs $\Q_\ass$ resp. $\Q_{\ass,\cov}$ with only correct conditional mean or conditional mean and covariance, they are not easy to realise numerically as the conditional expectation $\CE{\Q}{\YE}:\sY\rightarrow\sQ$ still has to be approximated.

A simple approximation is an affine approximation of the CE as
\begin{align*}
\CE{\Q}{\YE}(y)\approx \CE{\Q}{\YE}^1(y)=Ky+b
\text{ with }K\in\sR^{\dim\sQ\times\dim\sY}
\end{align*}
leading to a Gauss-Markov-Kalman filter \cite{Matthies2015,Matthies2016}
\begin{align}
\label{eq:filter_Kalman}
\Q_{\ass,1}(\xi,\theta) &= \Q(\xi)-K[\YE(\xi,\theta)]+K[\cy].
\end{align}
The optimal matrix $K$ coincides with the Kalman gain, explicitly expressed as $K=C_{\Q \YE}C_{\YE\YE}^{-1}$ using the covariance matrices $C_{\Q \YE}$ and $C_{\YE\YE}$ of the RVs $\Q$ and $\YE$;
the inverse is considered to be a Moore-Penrose pseudo-inverse.
This filter \eqref{eq:filter_Kalman} has several sampling variants such as the Ensemble Kalman filter \cite{Ernst2015}.
On the other hand, when the RVs $\YE$, $\Q$, and $\Q_{\ass,1}$ are expressed in terms of a polynomial chaos expansion (PCE) --- i.e.\ as a polynomial approximation of a basic random variable --- the PCE variant of the filter is obtained  \cite{Blanchard2010,Pajonk2012,Rosic2013,Rosic2012}.

A more complex approach is based on approximation of the CE with multivariate polynomials $\CE{\Q}{\YE}\approx\CE{\Q}{\YE}^k$ of order $k$, obtained by the minimisation of \eqref{eq:CE_min} over a corresponding space. It leads to an approximation of the CE mean filter \eqref{eq:filter_mean}
\begin{align}
\label{eq:filter_pol}
\Q_{\ass,k}(\xi,\theta)&=\Q(\xi)-\CE{\Q}{\YE}^k[\YE(\xi,\theta)]+\CE{\Q}{\YE}^k[\cy],
\end{align}
for details see \cite{Matthies2015,Matthies2016}.

\subsubsection{Numerical example}\label{sec:numerical-example-of-existing-filters}
\begin{figure}[ht]
 \centering
 \includegraphics[width=\figscalepar\linewidth]{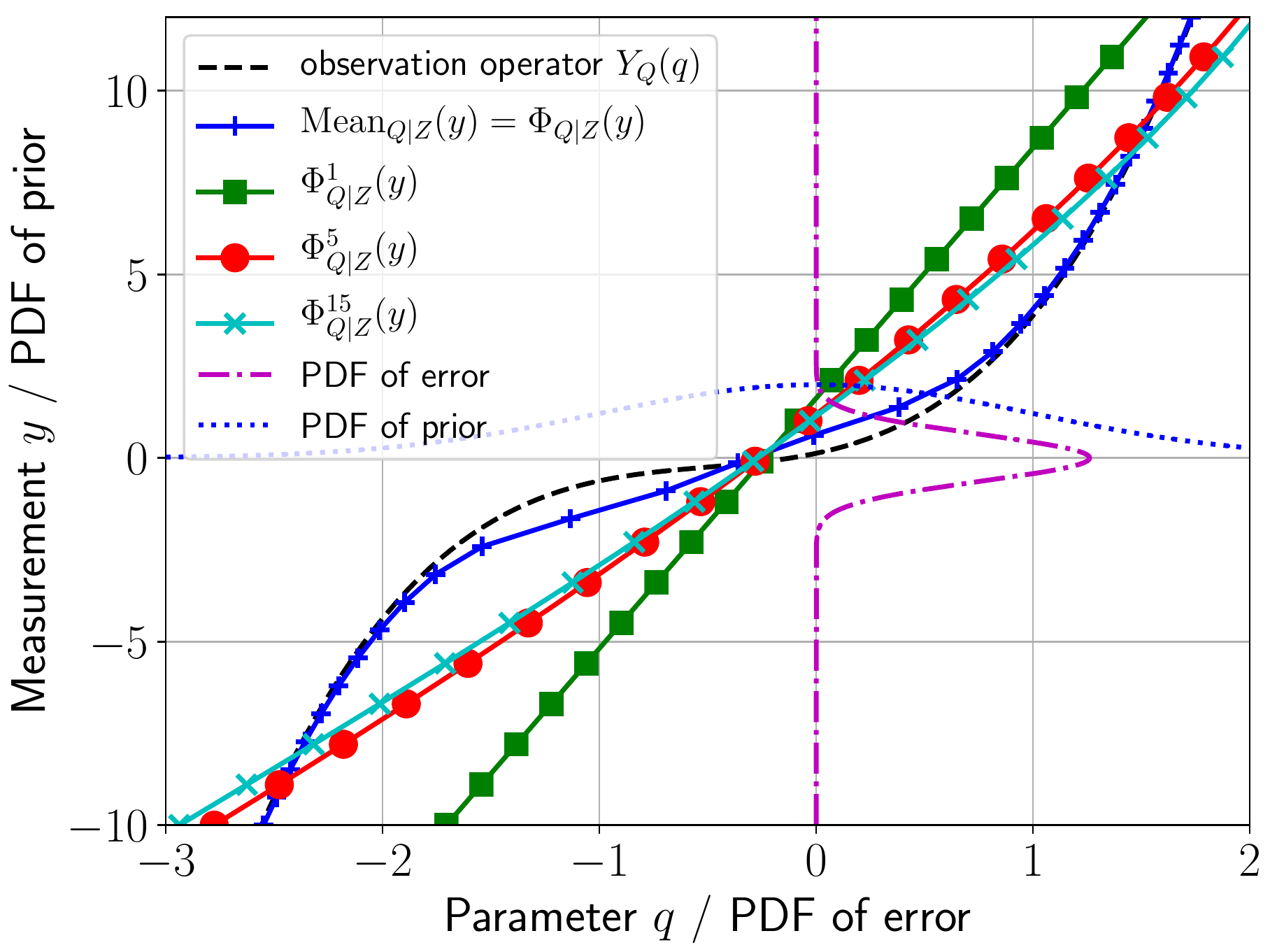}
 \caption{Example of conditional expectation predicting the posterior mean for a one-dimensional example with the spaces $\Xi=\Theta=\sQ=\sY=\sR$, the observation $\Yq(\q) = (\q+\tfrac{1}{2})^3+\tfrac{\q}{2}$, and the probability measures $\P_\Xi$ and $\P_\Theta$ with standard normal distribution $\distN(0,1)$.
  The random variables $\Q\sim\distN(0,1)$ with $\Q(\xi)=\xi$ and $E\sim\distN(0,0.4)$ with $E(\theta)=0.4^{-1/2}\theta$ determine the normal distribution of the parameter $\q=\Q(\xi)\in\sQ$ and of the error.
  The correct posterior mean $\textrm{Mean}_{\Q|\YE}(y)=\CE{\Q}{\YE}(y)$ is calculated from the PDF by \eqref{eq:pdf_mean}, approximations to CE $\CE{\Q}{\YE}^k$ by the minimisation problem \eqref{eq:CE_min} using polynomial approximation of order $k$.
 } 
 \label{fig:mmse_pol}
\end{figure}

One can observe the behaviour of the approximations to the conditional expectation from a simple one-dimensional example that is visualised in Figure~\ref{fig:mmse_pol} and described in detail in the caption of the figure.
The conditional expectation
$\textrm{Mean}_{\Q|\YE}(y)=\CE{\Q}{\YE}(y)$, i.e.\ the map that predicts the posterior mean given observation $y$ has been calculated from the PDF using \eqref{eq:pdf_mean} as accurately as possible,
so that Figure~\ref{fig:mmse_pol} shows the errors in predicted conditional mean due to the various approximations.
This is the difference between $\CE{\Q}{\YE}$, its affine approximation $\CE{\Q}{\YE}^1$, and the higher order approximations $\CE{\Q}{\YE}^5$ and $\CE{\Q}{\YE}^{15}$.

Not surprisingly, the linear approximation $\CE{\Q}{\YE}^1$, which corresponds to the Gauss-Markov-Kalman filter, does not predict the mean correctly.
Also, the higher polynomial approximations $\CE{\Q}{\YE}^k$ with $k=5, 15$ are of no big advantage.
In particular, if the observed value were $\cy=-5$, the linear approximation $\CE{\Q}{\YE}^1$ would provide a big overestimation of the mean, while if the observed value were $\cy=5$, it would provide an underestimation.
Similar remarks apply to the higher order approximations $\CE{\Q}{\YE}^5$ and $\CE{\Q}{\YE}^{15}$.

Analogically, although not depicted here, polynomial approximations of the posterior covariance $\CE{\bar{\Q}\otimes \bar{\Q}}{\YE}^k(y)$ for $\bar{\Q}=Q-\CE{\Q}{\YE}^k(y)$ may be very poor as they also depend on the approximation of the posterior mean.
Additionally, a polynomial approximation with an odd order can always result in a posterior covariance that fails to be  positive semidefinite for certain measurements due to the odd highest order term.

To conclude, the presented filters \eqref{eq:filter_Kalman} and \eqref{eq:filter_pol} may suffer from the poor approximation to the conditional expectation.
Therefore, they may fail to predict even the posterior mean correctly.
This will be overcome in the next section using conditioned expectation $\CE{\Q}{\YE}(\cy)$, i.e. the direct evaluation of the conditional expectation at the value of the measurement $\cy$.

\section{Conditioned expectation and its approximation}\label{sec:simple-conditional-expectation-of-random-fields}

In accordance with the discussion in the previous section, the conditional expectation of $\Q$, while observing $\YE$, predicts only the mean correctly, i.e.\ the value $\CE{\Q}{\YE}(\cy)$ that is called conditioned expectation (CdE).
It is thus superfluous to calculate the whole optimal map $\CE{\Q}{\YE}:\sY\rightarrow\sQ$ of the conditional expectation (CE) when only its value at $\cy$ is needed.
The following theorem shows how to reformulate CE into CdE for a general random variable $X:\Omega\rightarrow\sX$, possibly a function of the parameter $Q$.

\begin{theorem}[Conditioned expectation]
 \label{lem:main_result}
 Let $X\in\sL^1(\Omega,\F{S},\P;\sX)$ be a random variable such that $X(\xi,\cdot)$ is continuous and $\sX$ a finite dimensional space.
 Let $\YE\in\sL^1(\Omega,\F{S},\P;\sY)$ be a measurement \eqref{eq:measurement} with an error $E$, which is bijective (one-to-one map) and has a continuous probability density function $f_E$.
 Then the conditioned expectation  $\CE{X}{\YE}(\cy)$ at the point of observation $\cy$ is equal to
 \begin{align}
 \label{eq:CE_simple}
 \CE{X}{\YE}(\cy) &= \frac{ \int_{\Xi} X[\xi,E^{-1}(\cy-Y(\xi))] f_{E}[\cy-Y(\xi)] \P_\Xi(\D{\xi}) }{ \int_{\Xi} f_E[\cy-Y(\xi)] \P_\Xi(\D{\xi}) }.
 \end{align}
\end{theorem}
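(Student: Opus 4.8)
The plan is to check that the function defined by the right-hand side of \eqref{eq:CE_simple}, viewed as a map $g:\sY\to\sX$, fulfils the characterising orthogonality relation \eqref{eq:CE_duality}, and then to promote the resulting almost-everywhere identity to the pointwise value at $\cy$ using the continuity hypotheses. Accordingly I would first introduce the candidate
\begin{align*}
g(y) = \frac{ \int_{\Xi} X[\xi,E^{-1}(y-Y(\xi))] f_{E}[y-Y(\xi)] \P_\Xi(\D{\xi}) }{ \int_{\Xi} f_E[y-Y(\xi)] \P_\Xi(\D{\xi}) },
\end{align*}
so that \eqref{eq:CE_simple} becomes the assertion $\CE{X}{\YE}(\cy)=g(\cy)$.

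The computational core is a change of variables on the error domain $\Theta$. Since $\Omega=\Xi\times\Theta$ carries the product measure $\P=\P_\Xi\times\P_\Theta$, Fubini's theorem lets me treat the inner integral over $\theta$ separately. Because $E$ is a bijection with density $f_E$, the substitution $z=E(\theta)$, i.e.\ $\theta=E^{-1}(z)$, rewrites any $\theta$-integral as an integral over $\sY$ against $f_E(z)\D{z}$, while simultaneously turning $X(\xi,\theta)$ into $X(\xi,E^{-1}(z))$; a further translation $y=Y(\xi)+z$ aligns the argument of $\YE=Y+E$ (see \eqref{eq:measurement}) with $y$. Applying this to the bare expectation of any bounded test reveals that the law of $\YE$ is absolutely continuous with Lebesgue density
\begin{align*}
f_{\YE}(y)=\int_{\Xi} f_E[y-Y(\xi)]\,\P_\Xi(\D{\xi}),
\end{align*}
which is exactly the denominator of $g$, whereas the numerator equals $g(y)\,f_{\YE}(y)$.

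With these identities I would verify \eqref{eq:CE_duality}. For an arbitrary $W\in\sL^\infty(\sY,\sB_\sY,\P_{\YE};\sX)$, the same substitution turns $\int_\Omega X\cdot(W\circ\YE)\,\P(\D{\omega})$ into $\int_\sY [g(y)f_{\YE}(y)]\cdot W(y)\,\D{y}=\int_\Omega (g\circ\YE)\cdot(W\circ\YE)\,\P(\D{\omega})$, whence $\dual{X-g\circ\YE}{W\circ\YE}_{\sL^1\times\sL^\infty}=0$ for every such $W$. By the uniqueness of the conditional expectation as an element of $\sL^1(\sY,\sB,\P_{\YE};\sX)$ recalled in Section~\ref{sec:conditional-expectation_short}, this forces $g=\CE{X}{\YE}$ up to a $\P_{\YE}$-null set. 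Finally, continuity of $X(\xi,\cdot)$, of $f_E$, and of $E^{-1}$ makes $y\mapsto X[\xi,E^{-1}(y-Y(\xi))]\,f_E[y-Y(\xi)]$ continuous, so dominated convergence shows that $g$ is continuous; it is therefore the continuous representative of the equivalence class, and the almost-everywhere identity can be evaluated pointwise, giving $\CE{X}{\YE}(\cy)=g(\cy)$ wherever $f_{\YE}(\cy)>0$.

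I expect the main obstacle to be the joint change of variables $z=E(\theta)$: it must simultaneously reexpress the measure $\P_\Theta$ through its density $f_E$ and invert the map to substitute $\theta=E^{-1}(z)$, which is exactly where the bijectivity of $E$ is indispensable. Secondary care is needed in justifying Fubini (using $X\in\sL^1$ and the boundedness of $W$) and in the final upgrade from the $\P_{\YE}$-a.e.\ equality to the pointwise value, where the continuity assumptions and the positivity of $f_{\YE}(\cy)$ enter.
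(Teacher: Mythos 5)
Your proposal is correct and reaches the formula by a genuinely different route than the paper. The paper approximates the conditional expectation by simple functions $\sum_i \alp_i \ch{A_i}$ on a partition of $\sY$, observes that the Galerkin system for the orthogonality condition decouples so that the coefficient attached to the cell $\Ay=\cy+\Adel$ containing $\cy$ is the ratio $\int_{\Omega_\del} X \D{\P} / \int_{\Omega_\del} \D{\P}$ over the preimage $\Omega_\del$ of $\Ay$ under $\YE$, and then sends $\del\to 0$, using the substitution $z=E(\theta)$ and the continuity of $X(\xi,\cdot)$ and $f_E$ inside the shrinking average. You instead posit the closed-form candidate $g$, verify the defining orthogonality relation \eqref{eq:CE_duality} globally by Fubini and the substitutions $z=E(\theta)$, $y=Y(\xi)+z$ (which as a byproduct identifies the Lebesgue density $f_{\YE}(y)=\int_\Xi f_E[y-Y(\xi)]\P_\Xi(\D{\xi})$ of the law of $\YE$), invoke uniqueness of the conditional expectation, and only then localise to $\cy$ by selecting the continuous representative. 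Both arguments rest on the same two ingredients --- bijectivity of $E$ to rewrite $X(\xi,\theta)$ as a function of $E(\theta)$ under the substitution, and continuity to make a pointwise value of an $\sL^1$-equivalence class meaningful --- but your version replaces the paper's $\del\to 0$ differentiation-of-measures step by a dominated-convergence argument for the continuity of $g$; each of these needs, and neither proof fully spells out, a domination of the $\Xi$-integrand uniformly in $y$ near $\cy$, so you are at the same level of rigour as the paper. Your explicit caveat that the identity requires $f_{\YE}(\cy)>0$ makes visible a condition the paper leaves implicit, and the vector-valued case, which the paper reduces to scalars by pairing with $v\in\sX$, is absorbed in your argument into the vector-valued duality pairing. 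What your route buys is a self-contained verification against the variational characterisation without any partition or limiting process; what the paper's route buys is an interpretation of the formula as the limit of genuine conditional averages over shrinking observation cells.
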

Let us make some remarks before the proof will be presented.
\begin{remark}
 Since the value $f_E(\cy-Y(\xi))$ is the likelihood, the conditional expectation equals the weighted average of the random variable with weights being the likelihood \eqref{eq:likelihood}.
\end{remark}
\begin{proof}[Proof of Theorem~\ref{lem:main_result}]
 The proof is based on the approximation of a conditional expectation using simple functions along with a limiting process.
 Since one does not need the whole conditional expectation but only its value at $\cy$, the approximation of CE with a measurable map $\CEsign_{\del}:\sY\rightarrow\sX$ using simple functions is convenient because of their localisation effect.
 Moreover, simple functions are dense on $L^1$ as well as $L^\infty$.
 Starting with $Q=\sR$ for simplicity, the approximation
 \begin{align*}
 \CEsign_\del(y)=\sum_{i=1}^n\alp_i \ch{A_i}(y)
 \end{align*}
 with disjoint sets $A_i$ covering $\sY$ is considered.
 Indeed, there exists an index $j$ such that the observation $\cy$ is contained in $A_j$.
 For notational simplicity we will denote this space $A_j$ as $\Ay=\cy+\Adel$, where $\Adel$ is some $\del$-neighbourhood of $0$; particularly, some neighbourhood from Borel $\sigma$-algebra $\sB_\sY$ is considered, e.g.\ a subset $\prod_{i=1}^M(-\del/2,\del/2)$ centred at $\cy$ in $\sY$.
 
 The approximation of CE with simple functions at the observation $\cy$
 \begin{align*}
 \CEsign_\del(\cy) = \sum_{i=1}^n\alp_i \ch{A_i}(\cy) = \alp_j\ch{\Ay}(\cy)=\alp_j,
 \end{align*}
 equals the particular coefficient $\alp_i$ that can be obtained from the Galerkin approximation of the orthogonality condition \eqref{eq:orthogonality_dual}, i.e.
 \begin{align}
 \label{eq:orghogonality_alp}
 \alp_j \dual{\ch{A_j}\circ \YE}{\ch{A_i}\circ \YE}_{\sL^1\times \sL^\infty} &=
 \dual{X}{\ch{A_i}\circ \YE}_{\sL^1\times \sL^\infty}\quad\forall i=1,\dotsc,n.
 \end{align}
 This is a fully decoupled (diagonal) linear system for $\alp_i$ when it is tested with the characteristic functions.
 
 It can be simplified by integration over the support of the random variable $\ch{\Ay}\circ \YE$, i.e.\ a set
 \begin{align*}
 \Omega_{\delta}
 &=
 \{(\xi,\theta) \sep \xi\in\Xi \text{ and } \theta\in\Theta_\delta(\xi)\}
 \end{align*}
 where $\Theta_\delta(\xi)$ is a preimage of $\Ay$ w.r.t.\ error $E$, namely
 \begin{align*}
 \Theta_\delta(\xi) &= E^{-1}(\Adel + \cy - Y(\xi))=\{\theta\in\Theta \sep E(\theta) \in \Adel + \cy - Y(\xi)\}.
 \end{align*}
 The orthogonality \eqref{eq:orghogonality_alp} leads to a simple approximation of the conditional expectation
 \begin{align*}
 \alp_j=\CEsign_\delta(\cy) 
 &= \frac{ \int_{\Omega_\del} X(\omega) \P(\D{\omega}) }{ \int_{\Omega_\del}  \P(\D{\omega}) }
 = \frac{ \int_{\Xi} [\int_{\Theta_\delta(\xi)} X(\xi,\theta) \P_\Theta(\D{\theta})]  \P_\Xi(\D{\xi}) }{ \int_\Xi [\int_{\Theta_\delta(\xi)} \P_\Theta(\D{\theta})] \P_\Xi(\D{\xi})}.
 \end{align*}
 The conditional expectation can be then calculated as a limit for $\delta\rightarrow 0$, thanks to the density of simple functions.
 The integrals over $\Theta_\delta(\xi)$ can be averaged with the volume of $\Adel$ (i.e.\ with Lebesgue measure $\lambda$ of $\Adel$) in both nominator and denominator. Then the limit in the nominator is calculated as
 \begin{align*}
 \frac{1}{\meas{\Adel}} &\int_{\Theta_\del(\xi)} X(\xi,\theta)\P_\Theta(\D{\theta})
 \stackrel{\eqref{eq:change_of_var}}{=} \frac{1}{\meas{\Adel}} \int_{\Adel+\cy-Y(\xi)} X(\xi,E^{-1}(y)) \P_E(\D{y})
 \\
 &=
 \frac{1}{\meas{\Adel}} \int_{\Adel+\cy-Y(\xi)} X(\xi,E^{-1}(y)) f_E(y) \D{y}
 \stackrel{\del\rightarrow 0}{\longrightarrow} X[\xi,E^{-1}(\cy-Y(\xi))] f_E[\cy-Y(\xi)].
 \end{align*}
 The limit in the denominator is calculated as for $X(\xi,\theta)=1$ to obtain a likelihood $f_E[\cy-Y(\xi)]$. Both together yield the required formula \eqref{eq:CE_simple} for $\sX=\sR$.
 
 Now, the general case for finite dimensional $\sX$ will be proven.
 Taking $v\in\sX$ one can apply the proof on the scalar-valued random variable $X\cdot v$. The linearity of conditional expectation and of the integral leads to the required \emph{conditioned expectation}
 \begin{align*}
 v\cdot\CE{X}{\YE}(\cy) &= v\cdot\frac{ \int_{\Xi} X(\xi) f_{E}[\cy-Y(\xi)] \P_\Xi(\D{\xi}) }{ \int_{\Xi} f_E[\cy-Y(\xi)] \P_\Xi(\D{\xi}) }.
 \end{align*}
 The proof is finished since it holds for all $v\in\sX$.
\end{proof}
\begin{remark}
 We note that the space $\sL^1(\Omega,\F{S},\P;\sQ)$ of parameters is defined as a product space $\sL^1(\Xi\times\Theta,\F{S}_\Xi\times\F{S}_\Theta,\P_\Xi\times\P_\Theta;\sQ)$, where $\Xi$ is the original domain of the parameter $\Q$ while $\Theta$ is the domain for a random variable of error $E$.
 This means that the error is assumed to be independent of the parameters.
 The conditioned expectation presented here \eqref{eq:CE_simple} is valid for a special also case when $X(\xi,\theta)=\Q(\xi)$, see Corollary~\ref{lem:main_res_sim}, as well as for the more general random variable $X$, depending also on the error variable $\theta$.
 This could be useful when estimating the whole distribution of the updated random variable and not only the mean.
\end{remark}

\begin{remark}[Requirements on error E] The requirement on $E$ to be one-to-one can be fully omitted when the random variable $X$ depends only on the variable $\xi\in\Xi$, which is stated in the following simplified corollary.
 Nevertheless, the assumption is also satisfied in practical situations.
 For example when an error $E:\Theta\rightarrow\sY$ for $\Theta=\sY=\sR^m$ is assumed to have $m$ independent measurements, i.e.\ $E(\theta)=[E_1(\theta_1),E_2(\theta_2),\dotsc,E_m(\theta_m)]$, and when the individual errors $E_i(\theta_i)$ with $i\in\{1,2,\dotsc,\}$ have a continuous distribution $f_{E_i}$, typically assumed Gaussian or close to Gaussian.
 The distribution is thus expressed as the product of the individual ones $f_E(y)=\prod_{i=1}^m f_{E_i}(y_i)$.
 
 The probability densities exist
 \begin{align*}
 f_{E_i}(\q) = \sum_{i=1}^{n}\frac{f_{\theta_i}(r_i)}{|X'(r_i)|}
 \end{align*}
 also when the individual error terms are expressed in terms of polynomial chaos expansion, i.e.\ as a polynomial $E_i(\theta_i)=\sum_{j=0}^k \alp_j \theta_i^j$,
 where $r_i$ are roots of the polynomial $E_i(\theta)-\q=0$ and $f_{\theta_i}$ is a probability density of variable $\theta_i$ (normal distribution in case of Hermite polynomials).
\end{remark}

\begin{corollary}
 \label{lem:main_res_sim}
 Let $X\in\sL^1(\Omega,\F{S},\P;\sQ)$ be a random variable such that it depends only on the subdomain $\Xi$ of $\Omega=\Xi\times\Theta$, i.e.\ $X(\xi,\theta)=X(\xi)$. Let $\YE=(Y+E)\in\sL^1(\Omega,\F{S},\P;\sY)$ be a measurement with an error $E$, which has a continuous probability density function $f_E$. Then the conditioned expectation  $\CE{X}{\YE}(\cy)$ at the point of observation $\cy$ equals
 \begin{align*}
 \CE{X}{\YE}(\cy) &= \frac{ \int_{\Xi} X(\xi) f_{E}[\cy-Y(\xi)] \P_\Xi(\D{\xi}) }{ \int_{\Xi} f_E[\cy-Y(\xi)] \P_\Xi(\D{\xi}) }.
 \end{align*}
\end{corollary}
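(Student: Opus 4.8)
The plan is to adapt the simple-function approximation from the proof of Theorem~\ref{lem:main_result}, observing that the only step requiring the bijectivity of $E$ --- the change of variables through $E^{-1}$ in the numerator --- disappears as soon as $X$ no longer depends on the error variable $\theta$.

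First I would record the trivial half: if $E$ happened to be bijective, the claim would be immediate from Theorem~\ref{lem:main_result}, since substituting $X(\xi,\theta)=X(\xi)$ makes $X[\xi,E^{-1}(\cy-Y(\xi))]=X(\xi)$ and \eqref{eq:CE_simple} collapses at once to the asserted quotient. The whole content of the corollary is therefore to show that this formula persists when $E$ is merely measurable with a continuous density $f_E$, with no invertibility assumed.

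For that I would rebuild the localised approximation $\CEsign_\del(y)=\sum_i \alp_i\ch{A_i}(y)$ by disjoint Borel sets covering $\sY$, select the index $j$ with $\cy\in A_j=\Ay=\cy+\Adel$, and solve the diagonal Galerkin system \eqref{eq:orghogonality_alp} to reach $\alp_j=\CEsign_\del(\cy)=\bigl(\int_{\Omega_\del}X\,\P(\D{\omega})\bigr)\big/\bigl(\int_{\Omega_\del}\P(\D{\omega})\bigr)$, with $\Omega_\del$ the support of $\ch{\Ay}\circ\YE$. In the inner $\theta$-integral the $\theta$-independent factor $X(\xi)$ pulls out, giving $\int_{\Theta_\del(\xi)}X(\xi)\,\P_\Theta(\D{\theta})=X(\xi)\,\P_\Theta(\Theta_\del(\xi))=X(\xi)\,\P_E(\Adel+\cy-Y(\xi))=X(\xi)\int_{\Adel+\cy-Y(\xi)}f_E(y)\,\D{y}$; here the crucial middle equality is merely the definition of the push-forward $\P_E(B)=\P_\Theta(E^{-1}(B))$, which holds for any measurable $E$ and needs no inverse. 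Dividing by $\meas{\Adel}$ and sending $\del\to 0$, continuity of $f_E$ (Lebesgue differentiation) yields the pointwise limit $X(\xi)f_E(\cy-Y(\xi))$ in the numerator, while the same computation with $X\equiv 1$ gives $f_E(\cy-Y(\xi))$ in the denominator.

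The only genuine subtlety --- the step I expect to be the main obstacle --- is justifying that the $\del\to 0$ limit may be passed through the outer $\Xi$-integral and the quotient simultaneously. I would handle this by dominated convergence: the normalised inner integrals are bounded by the supremum of $f_E$ over a compact neighbourhood of the relevant arguments, and $X\in\sL^1(\Xi;\sQ)$ furnishes the integrable envelope for the numerator, so the numerator and denominator limits may be collected into the stated formula.
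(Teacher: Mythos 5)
Your proposal is correct and follows exactly the route the paper intends: the corollary is given without an explicit proof, the preceding remark indicating only that bijectivity of $E$ becomes unnecessary once $X$ is independent of $\theta$, and you have correctly identified that the sole use of $E^{-1}$ in the proof of Theorem~\ref{lem:main_result} is to evaluate $X[\xi,E^{-1}(y)]$, whereas the identity $\P_\Theta(\Theta_{\del}(\xi))=\P_E(\Adel+\cy-Y(\xi))$ needs only the preimage definition of the push-forward measure. Your extra care with dominated convergence when exchanging the $\del\to 0$ limit with the integral over $\Xi$ is a detail the paper's own argument also glosses over, so it is a welcome refinement rather than a deviation.
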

\subsection{The connection of conditioned expectation with probability densities}
Here we discuss the connection with conditioned expectation (Theorem~\ref{lem:main_result}) and the evaluation of mean \eqref{eq:pdf_mean} and covariance \eqref{eq:pdf_cov} based on probability densities.
\begin{lemma}
 \label{lem:connect_SCE_pdf}
 The mean \eqref{eq:pdf_mean} and covariance \eqref{eq:pdf_cov}, obtained from Bayesian updating using probability densities, are equal to conditioned expectations
 \begin{subequations}
  \begin{align}
  \label{eq:SCE_mean}
  \CE{\Q}{\YE}(\cy) &= \frac{ \int_{\Xi} \Q(\xi) f_{E}[\cy-Y(\xi)] \P_\Xi(\D{\xi}) }{ \int_{\Xi} f_{E}[\cy-Y(\xi)] \P_\Xi(\D{\xi}) }
  \\
  \label{eq:SCE_cov}
  \CE{\bar{\Q}\otimes \bar{\Q}}{\YE}(\cy) &= \frac{ \int_{\Xi} \bar{\Q}(\xi)\otimes \bar{\Q}(\xi) f_{E}[\cy-Y(\xi)] \P_\Xi(\D{\xi}) }{ \int_{\Xi} f_{E}[\cy-Y(\xi)] \P_\Xi(\D{\xi}) }
  \end{align}
 \end{subequations}
 for $\bar{\Q}=\Q-\CE{\Q}{\YE}(\cy)$, respectively.
\end{lemma}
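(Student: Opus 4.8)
The plan is to read the two displayed identities in the lemma as a chain of two equalities each, and to dispatch them in turn. The first equality --- that the conditioned expectations $\CE{\Q}{\YE}(\cy)$ and $\CE{\bar{\Q}\otimes\bar{\Q}}{\YE}(\cy)$ are given by the ratio-of-integrals formulas \eqref{eq:SCE_mean} and \eqref{eq:SCE_cov} --- is not something I would reprove: it is exactly Theorem~\ref{lem:main_result} (indeed Corollary~\ref{lem:main_res_sim} for the mean, since $\Q(\xi,\theta)=\Q(\xi)$ there) applied once with $X=\Q$ and once with $X=\bar{\Q}\otimes\bar{\Q}$, the latter also depending only on $\xi$ because $\CE{\Q}{\YE}(\cy)$ is a fixed vector of $\sQ$ for the given $\cy$. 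So the genuine content of the lemma is the \emph{second} equality in each line: that these $\Xi$-integrals reproduce the density-based mean \eqref{eq:pdf_mean} and covariance \eqref{eq:pdf_cov}. The whole argument is therefore a change-of-variables from the abstract domain $\Xi$ to the concrete parameter space $\sQ$.

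For the mean I would proceed as follows. First I use $Y(\xi)=\Yq(\Q(\xi))$ from \eqref{eq:observation} to rewrite every likelihood weight as $f_E[\cy-Y(\xi)]=f_E[\cy-\Yq(\Q(\xi))]$, which by \eqref{eq:likelihood} equals $f_{\YE|\Q}(\cy\sep\Q(\xi))$. Next I push the measure $\P_\Xi$ forward through the map $\Q:\Xi\to\sQ$: since $f_\Q$ is by definition the density of the prior push-forward $\Q_*\P_\Xi$, every integral of the form $\int_\Xi g(\Q(\xi))\,\P_\Xi(\D{\xi})$ becomes $\int_\sQ g(\q)\,f_\Q(\q)\D{\q}$. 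Applying this with $g(\q)=\q\,f_E[\cy-\Yq(\q)]$ in the numerator and $g(\q)=f_E[\cy-\Yq(\q)]$ in the denominator turns \eqref{eq:SCE_mean} into $\bigl(\int_\sQ \q\,f_{\YE|\Q}(\cy\sep\q)f_\Q(\q)\D{\q}\bigr)/\bigl(\int_\sQ f_{\YE|\Q}(\cy\sep\q)f_\Q(\q)\D{\q}\bigr)$. Recognising the denominator as the evidence $Z_s$ and using the Bayesian update \eqref{eq:pdf_update}, the integrand in the numerator is precisely $\q\,f_{\Q|\YE}(\q\sep\cy)$, so the ratio equals $\int_\sQ \q\,f_{\Q|\YE}(\q\sep\cy)\D{\q}=\textrm{Mean}_{\Q|\YE}(\cy)$, which is \eqref{eq:pdf_mean}.

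The covariance case is the same computation carried on a tensor-valued integrand. Here I would first note that $\bar{\Q}(\xi)=\Q(\xi)-\CE{\Q}{\YE}(\cy)$ subtracts the \emph{constant} vector already identified as $\textrm{Mean}_{\Q|\YE}(\cy)$, so it commutes with the integral and with the push-forward. Running the identical change of variables with $g(\q)=\bigl(\q-\textrm{Mean}_{\Q|\YE}(\cy)\bigr)\otimes\bigl(\q-\textrm{Mean}_{\Q|\YE}(\cy)\bigr)\,f_E[\cy-\Yq(\q)]$ converts \eqref{eq:SCE_cov} into $\int_\sQ \bigl[\q-\textrm{Mean}_{\Q|\YE}(\cy)\bigr]\otimes\bigl[\q-\textrm{Mean}_{\Q|\YE}(\cy)\bigr]\,f_{\Q|\YE}(\q\sep\cy)\D{\q}$, which is exactly \eqref{eq:pdf_cov}.

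The only step needing care --- and the one I expect to be the crux --- is the push-forward identity $\int_\Xi g(\Q(\xi))\,\P_\Xi(\D{\xi})=\int_\sQ g(\q)\,f_\Q(\q)\D{\q}$, which is where the existence of the prior density $f_\Q$ and the compatibility $Y=\Yq\circ\Q$ are actually used; everything else is bookkeeping with \eqref{eq:likelihood} and \eqref{eq:pdf_update}. I would state this change-of-variables cleanly once and then apply it to the scalar, vector, and tensor integrands uniformly, so that the mean and covariance claims fall out as the $g$-choices above.
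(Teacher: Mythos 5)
Your proposal is correct and follows essentially the same route as the paper: the first equality is Theorem~\ref{lem:main_result} (resp.\ Corollary~\ref{lem:main_res_sim}), and the second is the change of variables $\int_\Xi g(\Q(\xi))\,\P_\Xi(\D{\xi})=\int_\sQ g(\q)f_\Q(\q)\D{\q}$ via the push-forward measure $\P_\Q$, applied to $X(\Q)=\Q$ and $X(\Q)=\bar{\Q}\otimes\bar{\Q}$. You are merely a little more explicit than the paper in closing the loop through the likelihood identity \eqref{eq:likelihood} and the Bayesian update \eqref{eq:pdf_update} to land exactly on \eqref{eq:pdf_mean} and \eqref{eq:pdf_cov}.
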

\begin{proof}
 The calculation follows directly from the change of the variable formula in the integral~\ref{eq:change_of_var}.
 For clarity, the statements is explicitly written for a function $X(\sQ)$ of $\sQ$
 \begin{align*}
 \CE{X\circ\Q}{\YE}(\cy) &= \frac{ \int_{\Xi} X(\Q(\xi)) f_{E}[\cy-Y(\xi)] \P_\Xi(\D{\xi}) }{ \int_{\Xi} f_{E}[\cy-Y(\xi)] \P_\Xi(\D{\xi}) } = \frac{\int_{\sQ} X(\q) f_{E}[\cy-\Yq(\q)]\P_\Q(\D{\q})}{\int_{\sQ} f_{E}[\cy-\Yq(\q)] \P_\Q(\D{\q})}
 \\
 &= \frac{\int_{\sQ} X(\q) f_{E}[\cy-\Yq(\q)]f_\Q(\q)\D{\q}}{\int_{\sQ} f_{E}[\cy-\Yq(\q)] f_\Q(\q) \D{\q}}
 \end{align*}
 where a push-forward measure $\P_\Q$ is defined as $\P_\Q(A)=\P[\Q^{-1}(A)]$ for $A\in\sB_\sQ$, and the probability density $f_\Q$ is defined with respect to the Lebesgue measure. The formula for the posterior mean is obtained with $X(Q) = Q$ and for the posterior covariance with $X(Q) = (Q-\CE{\Q}{\YE}(\cy))\otimes (Q-\CE{\Q}{\YE}(\cy))$.
\end{proof}

\subsection{Numerical approximation of conditioned expectation}
The conditioned expectation, stated in Theorem~\ref{lem:main_result}, has to be integrated over the domain $\Xi$.
In practise, the integrals can be evaluated approximately
\begin{align}
\label{eq:CE_numerical}
\aCE{X}{\YE}(\cy)
\approx 
\frac{\sum_{i}w_{i} X[\xi_i,E^{-1}(\cy-Y(\xi_i))] f_{E}[\cy-Y(\xi_i)]}{\sum_{j}w_{j} f_{E}[\cy-Y(\xi_j)]}
\end{align}
using some numerical integration rule with integration points $\xi_i\in\Xi$ and corresponding integration weights $w_i\in\sR$ for $i\in\{1,2,\dotsc,k\}$, which are expressed with respect to the probability measure $\P_\Xi$.
As an alternative, the Monte Carlo method can be used with randomly chosen integration points, again w.r.t.\ $\P_\Xi$, and equal weight $w_i=\frac{1}{n}$.

The natural question arises about the positive-definiteness of the approximated covariance. The positive answer is summarised in the lemma.
We emphasise that the covariance computed with the linear $\CE{\bar{Q}\otimes\bar{Q}}{\YE}^1(\cy)$ or non-linear $\CE{\bar{Q}\otimes\bar{Q}}{\YE}^k(\cy)$ approximation to the CdE is not guaranteed to be positive semi-definite, and often it fails.
\begin{lemma}[Covariance]
 \label{lem:CE_covar}
 Let $\xi_i$ be the points of a numerical integration with positive weights $w_i>0$ for $i\in\{1,2,\dotsc,N\}$. Then the approximated covariance is positive semi-definite.
\end{lemma}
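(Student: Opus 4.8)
The plan is to make the positive semi-definiteness transparent by writing the approximated covariance explicitly as the quadrature formula \eqref{eq:CE_numerical} applied to the rank-one random variable $\bar{\Q}\otimes\bar{\Q}$, and then to test it against an arbitrary vector. Writing the approximated covariance as
\[
\widetilde{C} := \aCE{\bar{\Q}\otimes\bar{\Q}}{\YE}(\cy) = \frac{\sum_i w_i\,\bar{\Q}(\xi_i)\otimes\bar{\Q}(\xi_i)\, f_E[\cy-Y(\xi_i)]}{\sum_j w_j\, f_E[\cy-Y(\xi_j)]},
\]
with $\bar{\Q}(\xi_i)=\Q(\xi_i)-\aCE{\Q}{\YE}(\cy)$, the only quantity I need to control is the quadratic form $v\cdot\widetilde{C}\,v$ for an arbitrary $v\in\sQ$.

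The key step is the elementary identity $v\cdot\bigl(\bar{\Q}(\xi_i)\otimes\bar{\Q}(\xi_i)\bigr)v = (v\cdot\bar{\Q}(\xi_i))^2$, which turns the numerator into $\sum_i w_i (v\cdot\bar{\Q}(\xi_i))^2 f_E[\cy-Y(\xi_i)]$. Each factor in every summand is non-negative: the weights satisfy $w_i>0$ by hypothesis, the squared scalar product is non-negative by construction, and the likelihood $f_E[\cy-Y(\xi_i)]\ge 0$ because $f_E$ is a probability density. Hence the numerator of $v\cdot\widetilde{C}\,v$ is a sum of non-negative terms and is itself non-negative for every $v$.

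It then remains only to observe that the scalar denominator $\sum_j w_j f_E[\cy-Y(\xi_j)]$ is strictly positive — again a sum of non-negative products $w_j f_E[\cy-Y(\xi_j)]$, positive as soon as the likelihood does not vanish at every quadrature node, which is exactly the regime in which the approximation \eqref{eq:CE_numerical} is meaningful. Dividing the non-negative numerator by this positive scalar preserves the sign, so $v\cdot\widetilde{C}\,v\ge 0$ for all $v\in\sQ$, i.e.\ $\widetilde{C}$ is positive semi-definite. I do not expect a genuine obstacle here: the result is structural, flowing entirely from the positivity of the weights, the non-negativity of a density, and the rank-one PSD structure of each outer product; the only point meriting a word is the well-definedness of the quotient, i.e.\ the positivity of the denominator. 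This is also precisely why the statement fails for the polynomial surrogates $\CE{\bar{Q}\otimes\bar{Q}}{\YE}^k(\cy)$, where the individual summands are no longer constrained to be PSD.
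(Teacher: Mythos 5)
Your proposal is correct and follows essentially the same route as the paper's own proof: expand the quadratic form $v\cdot \widetilde{C}v$ through the quadrature sum and use $v\cdot(\bar{\Q}(\xi_i)\otimes\bar{\Q}(\xi_i))v=(v\cdot\bar{\Q}(\xi_i))^2\ge 0$ together with $w_i>0$ and $f_E\ge 0$. Your explicit remark on the strict positivity of the denominator is a small point of added care that the paper's proof omits, but it does not change the argument.
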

\begin{proof}
 Let $C\in\sQ\otimes\sQ$ be an approximation to a covariance matrix, i.e.
 \begin{align}
 \label{eq:CE_covar}
 C = \aCE{\bar{\Q}\otimes\bar{\Q}}{\YE}(\cy) 
 &=\frac{\sum_{j}w_{j} \bar{\Q}(\xi_j)\otimes \bar{\Q}(\xi_j) f_{E}[\cy-Y(\xi_j)] }{ \sum_{j}w_{j} f_{E}[\cy-Y(\xi_j)]}
 \end{align}
 where $\bar{\Q}=\Q-\CE{\Q}{\YE}(\cy)$. Then for every $\q\in\sQ$, we can calculate
 \begin{align*}
 \scal{Cq}{q}_\sQ = \frac{\sum_{j}w_{j} \scal{\bar{\Q}(\xi_j)\otimes \bar{\Q}(\xi_j)q}{q}_\sQ f_{E}[\cy-Y(\xi_j)] }{ \sum_{j}w_{j} f_{E}[\cy-Y(\xi_j)]} \geq 0
 \end{align*}
 because $\scal{\bar{\Q}(\xi_j)\otimes \bar{\Q}(\xi_j)q}{q}_\sQ=\scal{\bar{\Q}(\xi_j) q}{\bar{\Q}(\xi_j)q}_\sQ>0$ for all $j$.
\end{proof}

\begin{remark}[Parameters as PCE]
 In many cases the parameters $\Q$ are approximated in terms of polynomial chaos expansion (PCE), i.e.\ $Q(\xi) = \sum_{k=1}^p Q_k \psi_k(\xi)$. In the numerical integration of CdE it requires only the additional evaluation of polynomials
 \begin{align*}
 \aCE{\Q}{\YE}(\cy) &= \frac{ \sum_{j} \sum_{k=1}^p  w_j Q_k \psi_k(\xi_j) f_{E}[\cy-Y(\xi_j)] }{ \sum_j f_{E}[\cy-Y(\xi_j)]  }.
 \end{align*}
\end{remark}
\begin{remark}[Parameters as samples] Another option for parameters is when they are given as (Monte Carlo) samples $\bigl(Q(\xi_i)\bigr)_{i=1}^N$. In that case the samples are used for integral evaluation in \eqref{eq:CE_numerical}.
\end{remark}
\section{Numerical examples}\label{sec:numerical-examples}

\begin{figure}[h]
 \centering
 \includegraphics[width=\figscalepar\linewidth]{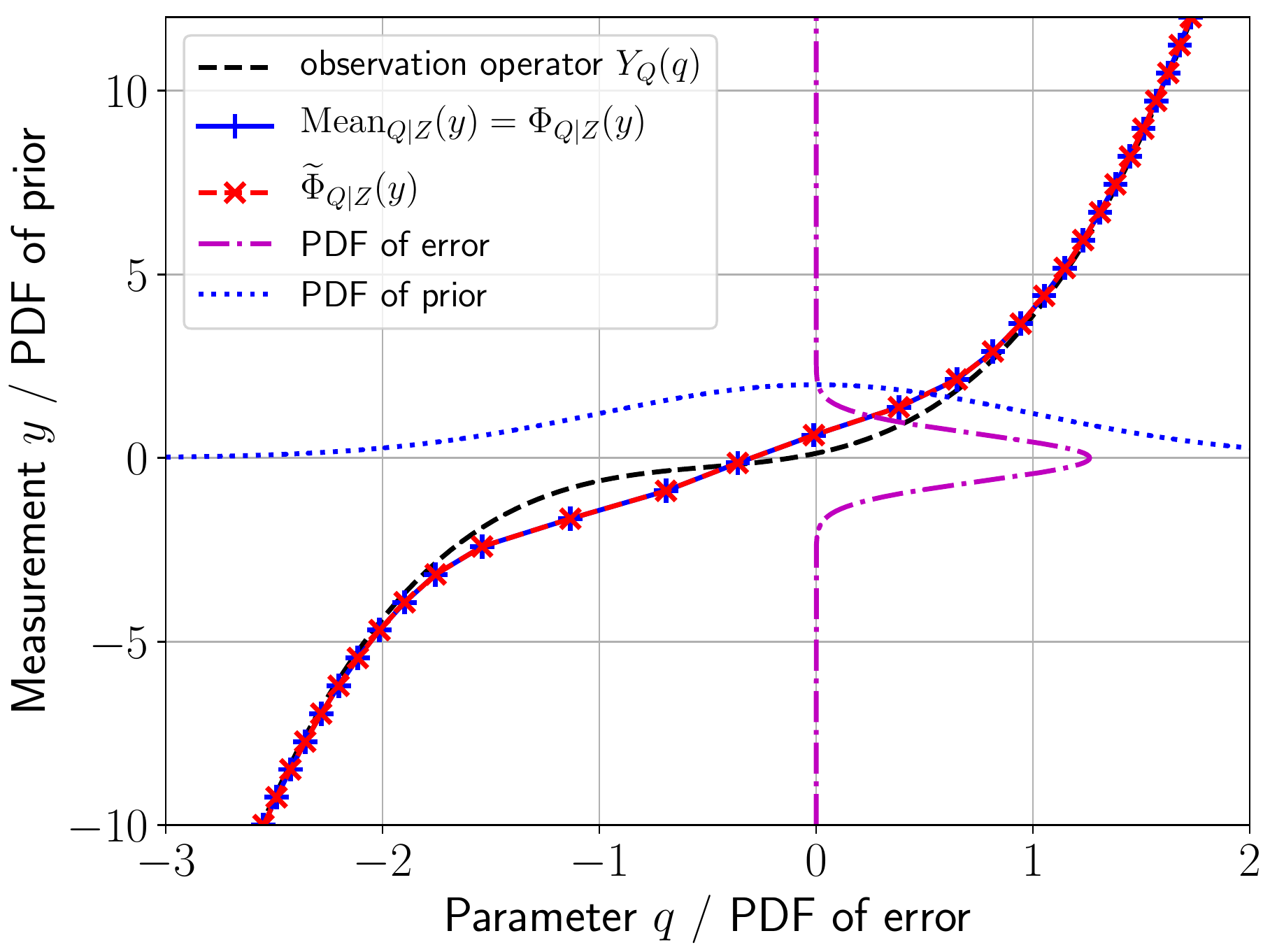}
 \includegraphics[width=\figscalepar\linewidth]{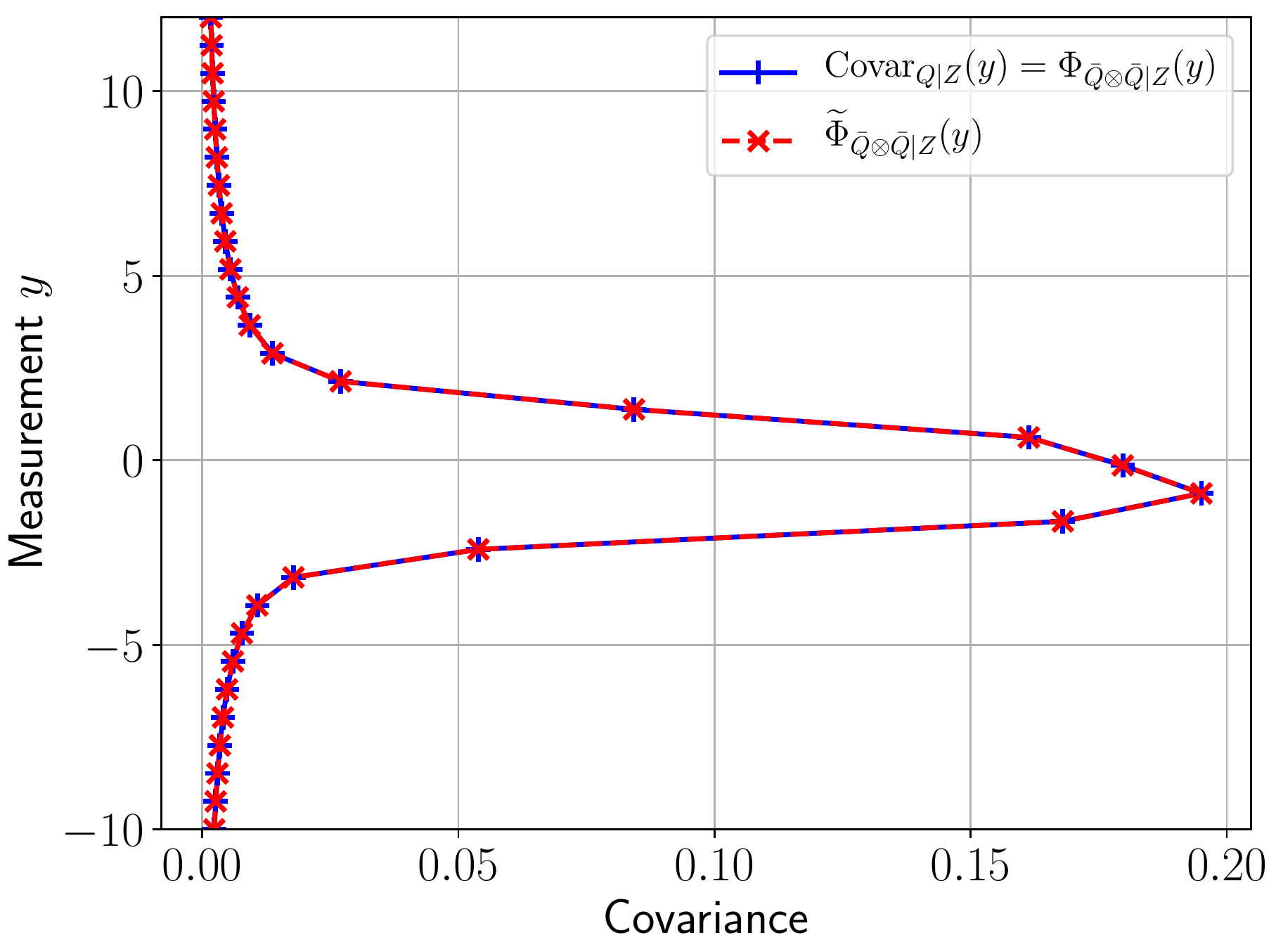}
 \caption{Example of conditional expectation predicting the posterior mean (left) and variance (right) for a one-dimensional example with the spaces $\Xi=\Theta=\sQ=\sY=\sR$, the measurement $\Yq(\q) = (\q+\tfrac{1}{2})^3+\tfrac{\q}{2}$, and the probability measures $\P_\Xi$ and $\P_\Theta$ with standard normal distribution $\distN(0,1)$.
  The random variables $\Q\sim\distN(0,1)$ with $\Q(\xi)=\xi$ and $E\sim\distN(0,0.4)$ with $E(\theta)=0.4^{1/2}\theta$ determine the normal distribution of the parameter $\q=\Q(\xi)\in\sQ$ and of the error.
  The correct posterior mean $\textrm{Mean}_{\Q|\YE}(y)=\CE{\Q}{\YE}(y)$ and covariance $\textrm{Covar}_{\Q|\YE}(y)=\CE{\bar{Q}\otimes\bar{Q}}{\YE}(y)$ are calculated from the PDF by \eqref{eq:pdf_post} and conditioned expectations of mean $\aCE{\Q}{\YE}(y)$ and covariance $\aCE{\bar{\Q}\otimes\bar{\Q}}{\YE}(y)$ with $\bar{\Q}=\Q-\CE{\Q}{\YE}(y)$ by
  \eqref{eq:CE_numerical}.}
 \label{fig:1d_Q3}
\end{figure}

Numerical examples that are presented here to confirm the theoretical findings were conducted using Python implementation StoPy that is freely available at \href{https://github.com/vondrejc/StoPy}{https://github.com/vondrejc/StoPy}.
All the numerical values were calculated as exactly as possible to suppress any error stemming from the numerical integration.
Particularly, we have utilised Gauss-Hermite quadrature of sufficiently high order, the QUADPACK library \cite{Favati1991} for an automated integration, or the Monte-Carlo method to double-check the results.

The first example is exactly the same as in Figure~\ref{fig:mmse_pol} that compares the Gauss-Markov-Kalman filter with higher-order polynomial approximations of conditional expectation.
The results presented in Figure~\ref{fig:1d_Q3} clearly indicate that the conditioned expectation predicts the mean as well as the variance accurately.

\begin{figure}[h]
 \centering
 \includegraphics[width=\figscalepar\linewidth]{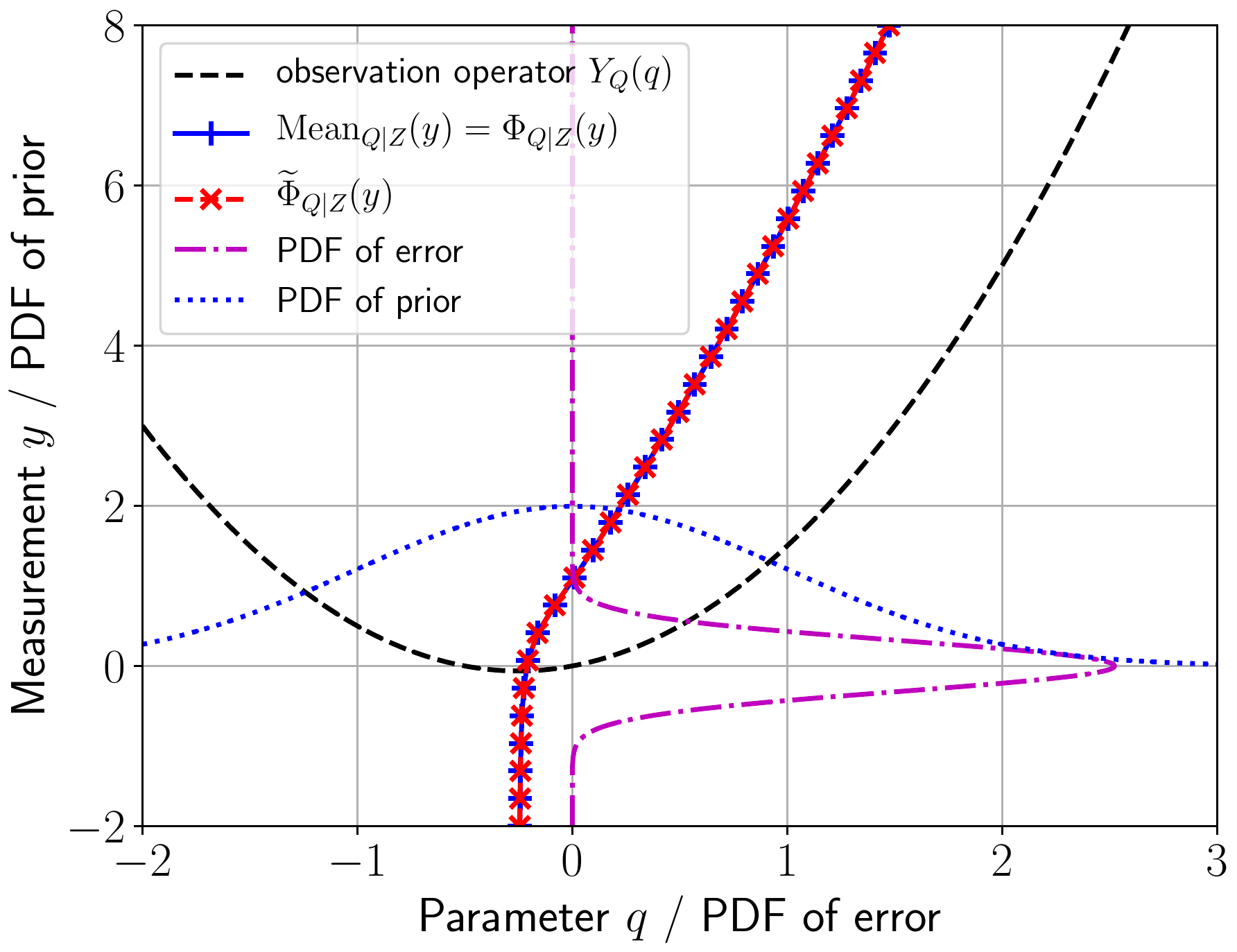}
 \includegraphics[width=\figscalepar\linewidth]{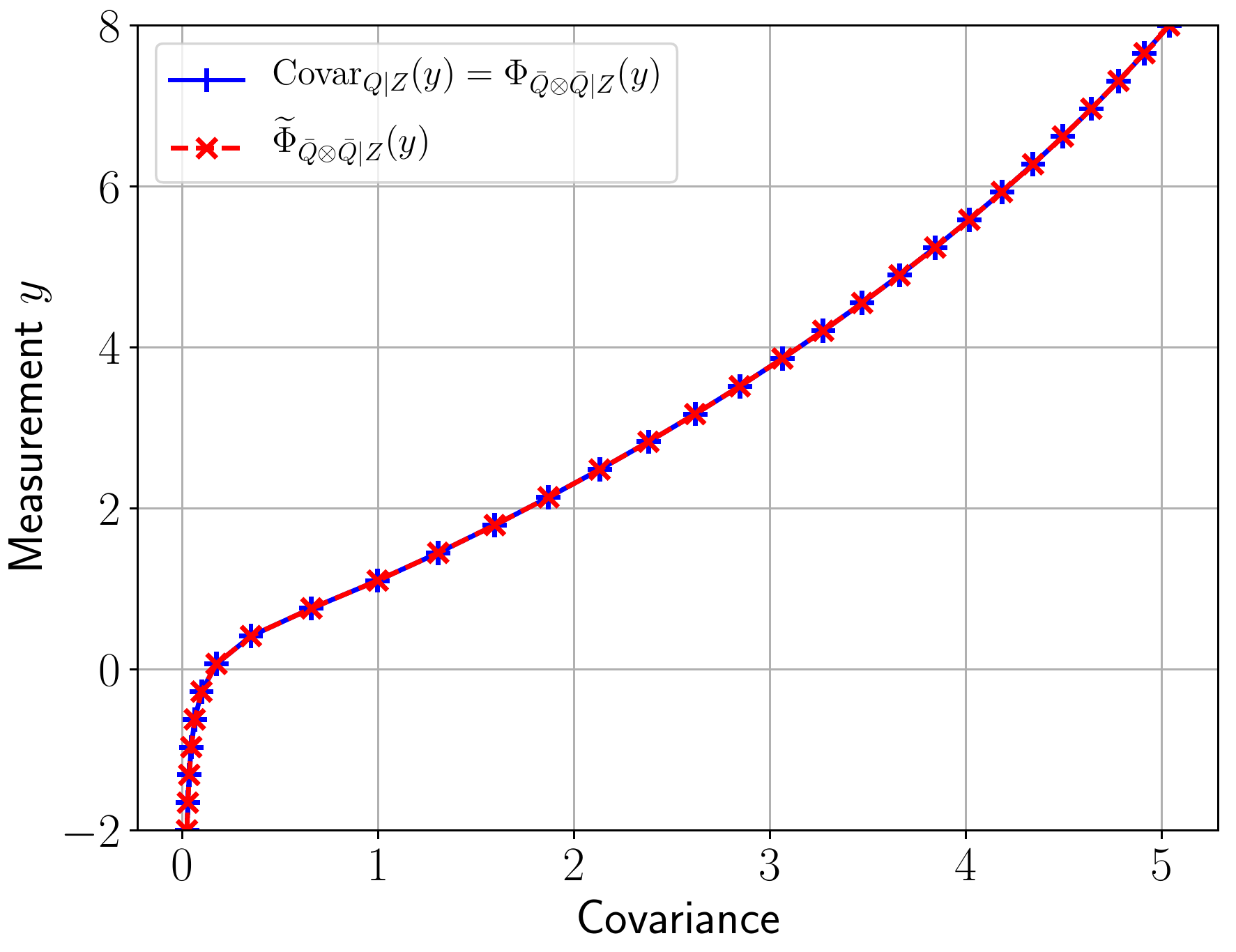}
 \caption{Example of conditional expectation predicting the posterior mean (left) and variance (right) for a simple 1D example with the spaces $\Xi=\Theta=\sQ=\sY=\sR$, the measurement $\Yq(\q) = \q^2+\tfrac{\q}{2}$,  and the probability measures $\P_\Xi$ and $\P_\Theta$ with standard normal distribution $\distN(0,1)$.
  The random variables $\Q\sim\distN(0,1)$ with $\Q(\xi)=\xi$ and $E\sim\distN(0,10^{-1})$ with $E(\theta)=10^{-1/2}\theta$ determine the normal distribution of the parameter $\q=\Q(\xi)\in\sQ$ and of the error.
  The correct posterior mean $\textrm{Mean}_{\Q|\YE}(y)=\CE{\Q}{\YE}(y)$ and covariance $\textrm{Covar}_{\Q|\YE}(y)=\CE{\bar{Q}\otimes\bar{Q}}{\YE}(y)$ are calculated from the PDF by \eqref{eq:pdf_post} and conditioned expectations of mean $\aCE{\Q}{\YE}(y)$ and covariance $\aCE{\bar{\Q}\otimes\bar{\Q}}{\YE}(y)$ with $\bar{\Q}=\Q-\CE{\Q}{\YE}(y)$ by
  \eqref{eq:CE_numerical}.
 }
 \label{fig:1d_Q2}
\end{figure}

The second example, presented in Figure~\ref{fig:1d_Q2}, is an example with a non-monotonous solution operator that leads to double peaks of the conditional distribution.
Again, it clearly confirms that conditional mean and covariance are accurately predicted by conditioned expectation.

\begin{figure}
 \centering
 \includegraphics[width=\figscalepar\linewidth]{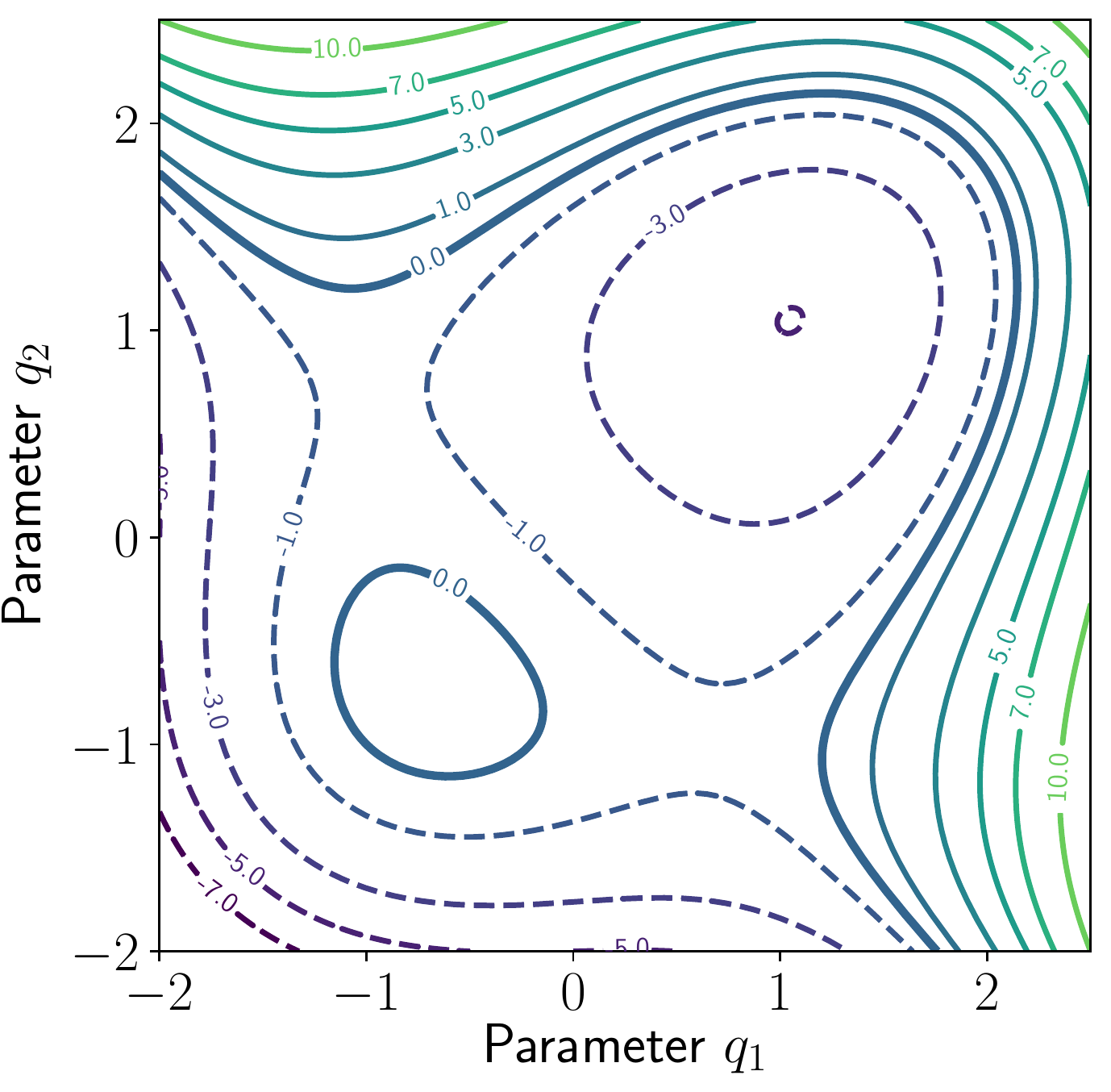}
 \caption{A response surface for a two-dimensional scalar example with spaces $\Xi=2$ and $\Theta=\sQ=\sY=\sR$, the measurement $\Yq(\q) = \q_1(\q_1+\tfrac{3}{2})(\q_1-\tfrac{3}{2})+\q_2(\q_2+\tfrac{3}{2})(\q_2-\tfrac{3}{2})-\q_1\q_2$, the observed value $\cy=0$ emphasised with bold, and the probability measures $\P_\Xi$ and $\P_\Theta$ with standard (multivariate) normal distribution, i.e.\ $\distN([0,0],\mathrm{diag}([1,1]))$ and $\distN(0,1)$, respectively.
  The RVs $\Q\sim\distN([0,0],\mathrm{diag}([1,1]))$ with $\Q(\xi)=[\xi_1,\xi_2]$ and $E\sim\distN(0,0.4)$ with $E(\theta)=0.4^{1/2}\theta$ determine the normal distribution of the parameter $\q=\Q(\xi)\in\sQ$ and of the error.}
 \label{fig:fig2d_response}
\end{figure}

\begin{figure}[h]
 \centering
 \includegraphics[width=\figscalepar\linewidth]{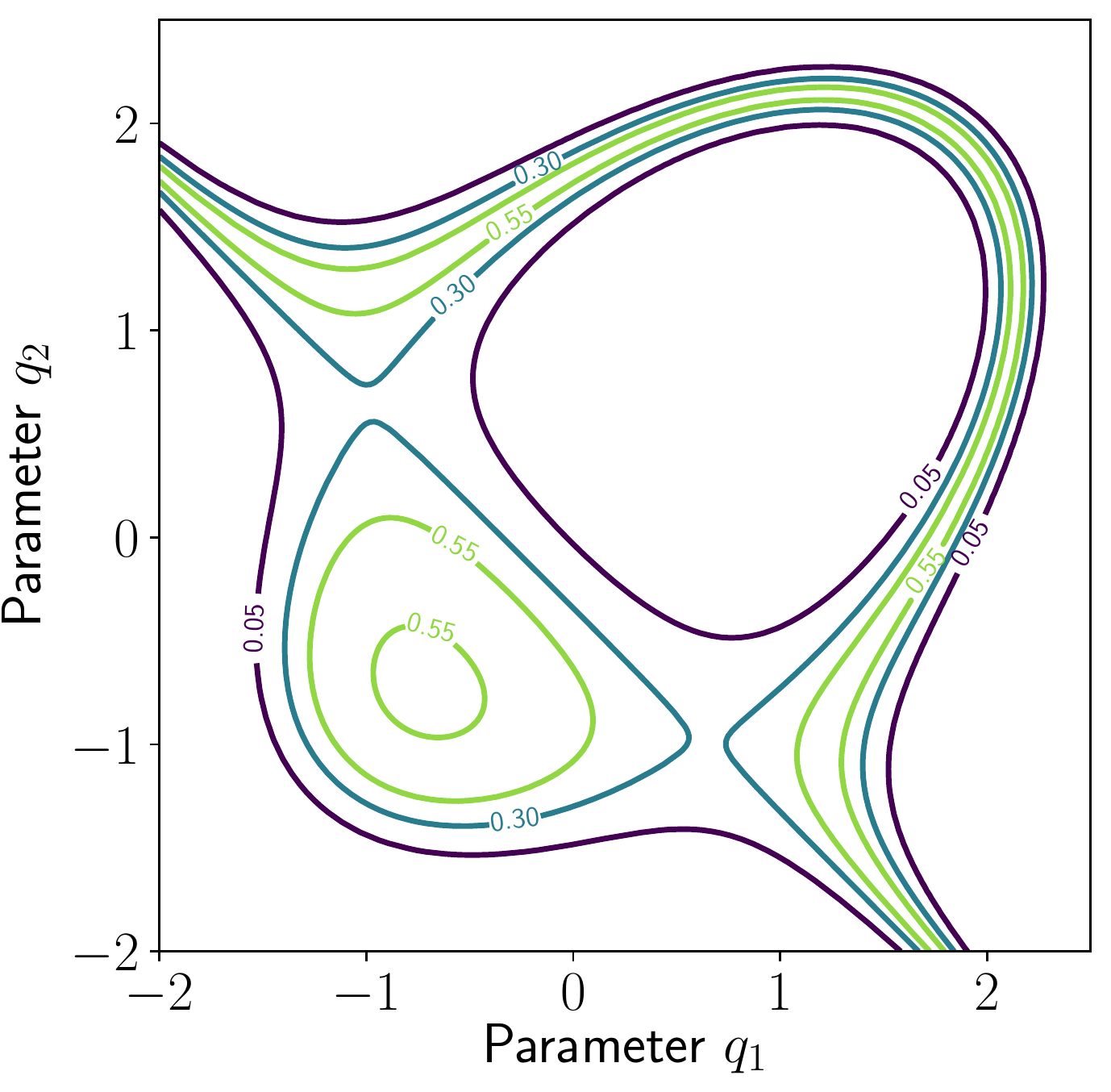}
 \includegraphics[width=\figscalepar\linewidth]{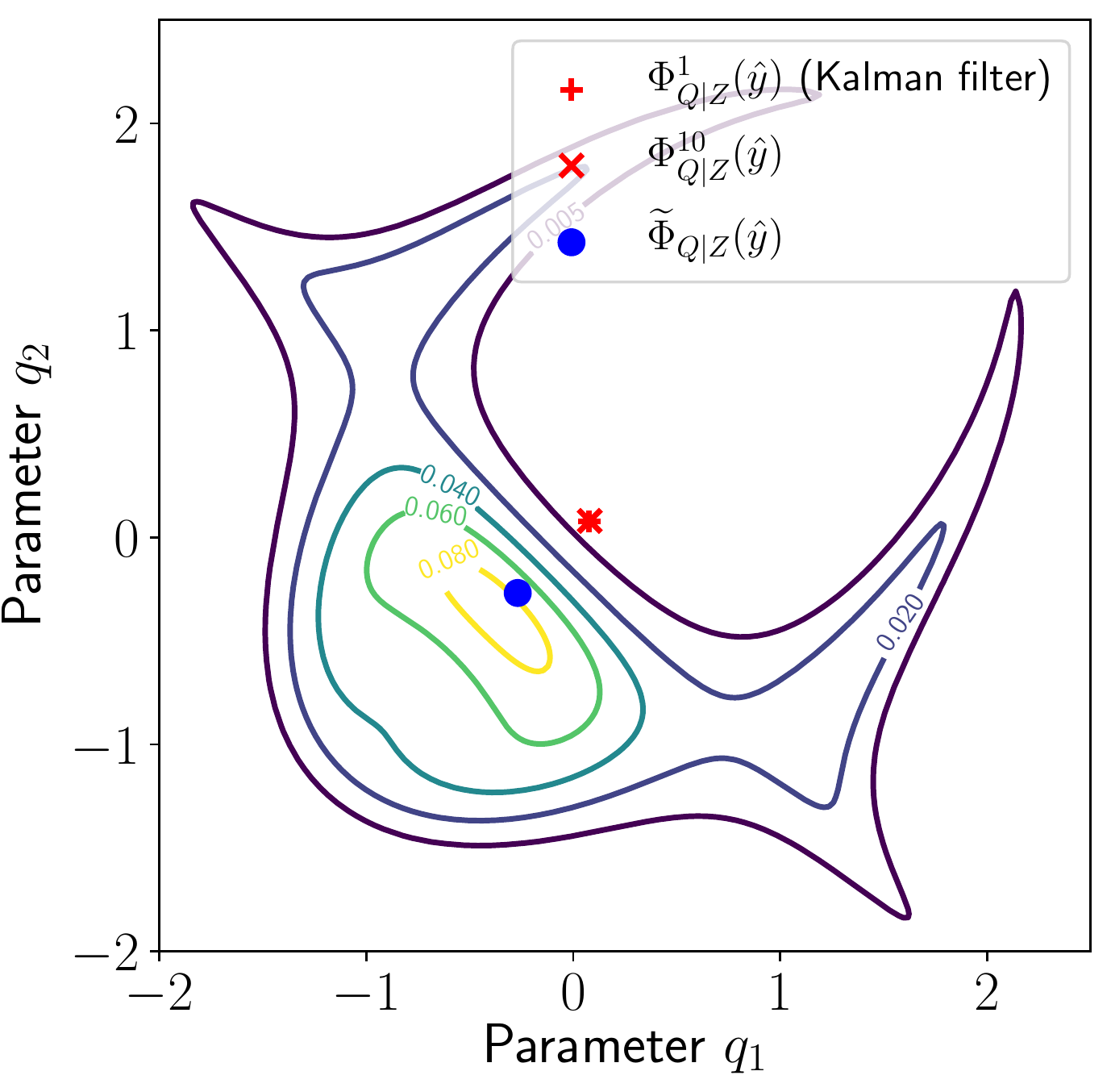}
 \caption{Bayesian updating for the two-parameter problem from Figure~\ref{fig:fig2d_response} with likelihood (left) and the probability density of the posterior distribution \eqref{eq:pdf_update} (right), which also shows the accurate conditioned expectation $\CE{\Q}{\YE}(\cy)$ of the mean and its polynomial approximations \eqref{eq:filter_pol}.
 }
 \label{fig:2d}
\end{figure}

The last example focuses on a problem with two parameters and one measurement, i.e.\ when $\sY=\sR$ and $\q$ belongs to $\sQ=\sR^2$ and.
This example is described in the caption of Figure~\ref{fig:fig2d_response}, which shows a response surface of the forward problem.
The likelihood function \eqref{eq:likelihood} and the posterior distribution (in terms of probability density \eqref{eq:pdf_update}) are depicted in Figure~\ref{fig:2d}.
Particularly, it shows the conditional expectation and its approximations that predict or approximate the updated mean; the concrete values are summarised here
\begin{align*}
\aCE{\Q}{\YE}(\cy) &=-[0.2834, 0.2834],
&
\CE{\Q}{\YE}^1(\cy) &= [ 0.0775,  0.0775],
&
\CE{\Q}{\YE}^5(\cy)&=[ 0.0805,  0.0805],
\\
\CE{\Q}{\YE}^{10}(\cy)&=[ 0.0795,  0.0795],
&
\CE{\Q}{\YE}^{15}(\cy)&=[ 0.0781,  0.0781],
&
\CE{\Q}{\YE}^{20}(\cy)&=[ 0.0775,  0.0775].
\end{align*}
One can notice the big difference between conditioned expectation $\CE{\Q}{\YE}(\cy)$ predicting the mean accurately and its polynomial approximation.
Particularly, the conditioned expectation predicts the covariance matrix
\begin{align*}
\aCE{\bar{\Q}\otimes\bar{\Q}}{\YE}(\cy) = 
\left[\begin{matrix}0.6132200662801 & -0.1438067291666\\-0.1438067291666 & 0.6132200662801\end{matrix}\right]
\end{align*}
which has positive eigenvalues $0.46941334$ and $0.7570268$ corresponding to eigenvectors $[1, 1]$ and $[1,-1]$, i.e.\ the axes of the quadrants in the graph.

\clearpage
\section{Conclusion}\label{sec:conclusion}
This paper is focused on the computation of conditional expectation (CE) which is the proper approach in inverse problems using Bayesian updating in terms of random variables.
It is only necessary to evaluate the conditional expectation $\CE{\Q}{\Z}$ at the point of measurement $\cy$ in order to obtain conditional probability distribution.
Therefore, under the assumption that the observation is accompanied with an additive error, the CE is reformulated to express directly the value $\CE{\Q}{\Z}(\cy)$, here called a \emph{conditioned expectation} (CdE); the main result is stated in Theorem~\ref{lem:main_result} and Corollary~\ref{lem:main_res_sim}.
The consequences and other outcomes are summarised as follows:
\begin{itemize}
 \item CdE predicts correctly a conditional mean \eqref{eq:SCE_mean} and covariance \eqref{eq:SCE_cov}, which overcomes the linear approximation of CE used in the Gauss-Markov-Kalman filter \eqref{eq:filter_Kalman} or CE approximated with multivariate polynomials \eqref{eq:filter_pol}, see numerical example in Figure~\ref{fig:mmse_pol}.
 
 \item Numerical implementation \eqref{eq:CE_numerical} of CdE is straight-forward and requires only a numerical integration over the domain of the prior random variable; the approximation of CE with polynomials requires also integration over the error domain.
 
 \item For a numerical integration scheme with positive weights, the approximated covariance matrix \eqref{eq:CE_covar} using CdE is always positive semi-definite (Lemma~\ref{lem:CE_covar}).
 
 \item The connection between CdE and Bayesian updating in terms of probability measures (densities) is established (Lemma~\ref{lem:connect_SCE_pdf}). It is essentially a change of variables.
\end{itemize}

\appendix
\section{Conditional expectations, probabilities, and distributions}\label{sec:conditional-expectation}

This section contains an abstract description of conditional expectation, conditional probability, and conditional distribution; the diagram of the spaces and maps can be found in \eqref{diag:commutative_diag}.
When it is useful for a better understanding, the proof or its outline is presented.

\subsection{Conditional expectation}
\begin{definition}
 Let $X\in \sL^1(\Omega,\F{S},\P;\sX)$ be a random variable and $Z\in\sL^1(\Omega,\F{S},\P;\sY)$ be a measurement with Borel $\sigma$-algebra $\sB$ on $\sY$. Let 
 \begin{align}
 \label{eq:sigmaY}
 \sigma(Z)=Z^{-1}(\sB) = \{Z^{-1}(B)\sep B\in\sB\}
 \end{align}
 be the $\sigma$-algebra generated by $Z$. 
 Then the conditional expectation of $X$ w.r.t.\ $Z$ is a random vector $\CE{X}{Z}$ from $\sL^1(\sY,\sB,\P_Z;\sX)$ satisfying
 \begin{align}
 \label{eq:CE_Y}
 \int_{A} X(\omega)\P(\D{\omega}) = \int_{A} \CE{X}{Z}\circ Z(\omega)\P(\D{\omega})
 \quad\forall A\in\sigma(Z).
 \end{align}
\end{definition}
\begin{remark}
 \label{rem:EsigY}
 In many manuscripts, the conditional expectation $\E{X|\sigma(Z)}:\Omega\rightarrow\sX$ w.r.t.\ the $\sigma$-algebra $\sigma(Z)$ satisfying
 $\int_A X(\omega)\P(\D{\omega}) = \int_A \E{X|\sigma(Z)}(\omega)\P(\D{\omega})$ for all $A\in\sigma(Z)$
 is introduced instead of $\CE{X}{Z}$. However, they are simply related as $\E{X|\sigma(Z)} = \CE{X}{Z}\circ Z$.
\end{remark}
\begin{lemma}
 The conditional expectation from the previous definition exists and is unique up to $\P_Z$-almost everywhere equality.
\end{lemma}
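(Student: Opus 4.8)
The plan is to build $\CE{X}{Z}$ from the Radon--Nikod\'ym theorem applied on the sub-$\sigma$-algebra $\sigma(Z)$ and then to push the resulting density through $Z$ by the Doob--Dynkin factorisation lemma. Because $\sX$ is finite dimensional, I would first fix a basis and argue componentwise, so that it suffices to treat scalar $X\in\sL^1(\Omega,\F{S},\P;\sR)$; linearity of the defining integral \eqref{eq:CE_Y} then reassembles the vector-valued claim from the scalar one.

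For existence in the scalar case, define the set function $\nu(A)=\int_A X(\omega)\,\P(\D{\omega})$ for $A\in\sigma(Z)$. Since $X\in\sL^1$, this $\nu$ is a finite signed measure on $(\Omega,\sigma(Z))$, and it is absolutely continuous with respect to the restriction of $\P$ to $\sigma(Z)$, because $\P(A)=0$ forces $\nu(A)=0$. The Radon--Nikod\'ym theorem \cite{Rao2006book} then produces a $\sigma(Z)$-measurable density $g$ with $\nu(A)=\int_A g(\omega)\,\P(\D{\omega})$ for all $A\in\sigma(Z)$; this $g$ is exactly $\E{X\sep\sigma(Z)}$ in the sense of Remark~\ref{rem:EsigY}, and it lies in $\sL^1$ since $\int_\Omega|g|\,\P(\D{\omega})\le\int_\Omega|X|\,\P(\D{\omega})$.

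The step that converts $g$ into the map $\CE{X}{Z}\colon\sY\to\sX$ required by the definition is the factorisation lemma: as $\sigma(Z)=Z^{-1}(\sB)$, every $\sigma(Z)$-measurable function $g$ is of the form $g=\Phi\circ Z$ for some $\sB$-measurable $\Phi\colon\sY\to\sX$. Setting $\CE{X}{Z}:=\Phi$ gives the sought random vector, and the change of variables to the push-forward measure confirms $\CE{X}{Z}\in\sL^1(\sY,\sB,\P_Z;\sX)$. I expect this factorisation to be the one genuinely delicate point: one must know that $\sigma(Z)$-measurability is precisely equivalent to being a Borel pull-back along $Z$, which for finite-dimensional $\sY$ is standard but is the hinge that links the abstract density to a concrete map on the measurement space.

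For uniqueness, suppose $\Phi_1$ and $\Phi_2$ both satisfy \eqref{eq:CE_Y}. Subtracting gives $\int_A\bigl[(\Phi_1-\Phi_2)\circ Z\bigr](\omega)\,\P(\D{\omega})=0$ for every $A\in\sigma(Z)$. Arguing in each component and testing against the $\sigma(Z)$-set on which that component of $(\Phi_1-\Phi_2)\circ Z$ is nonnegative, together with its complement, forces $(\Phi_1-\Phi_2)\circ Z=0$ holding $\P$-almost everywhere, which is precisely the assertion that $\Phi_1=\Phi_2$ holds $\P_Z$-almost everywhere.
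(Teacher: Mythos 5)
Your proof is correct, but it takes a genuinely different route from the paper's. You apply the Radon--Nikod\'ym theorem on the sub-$\sigma$-algebra $\sigma(Z)$ of $\Omega$, obtaining the classical $\E{X\sep\sigma(Z)}$ as a $\sigma(Z)$-measurable density on $\Omega$, and then invoke the Doob--Dynkin factorisation lemma to write it as $\Phi\circ Z$ with $\Phi\colon\sY\to\sX$ Borel. The paper instead defines the measure directly on the measurement space, $\nu(B)=\int_{Z^{-1}(B)}X(\omega)\P(\D{\omega})$ for $B\in\sB_\sY$, checks absolute continuity with respect to the push-forward $\P_Z$, and differentiates there; the Radon--Nikod\'ym derivative is then already a function on $\sY$, and the defining identity \eqref{eq:CE_Y} follows from the change-of-variable formula (Lemma~\ref{lem:change_of_var}). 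The paper's route thus sidesteps the factorisation step you correctly flag as the delicate hinge of your argument, at the cost of working with the less familiar measure $\nu$ on $\sB_\sY$; your route stays closer to the textbook construction and makes the link to Remark~\ref{rem:EsigY} explicit. Your componentwise reduction is the same device as the paper's pairing with arbitrary $v\in\sX$, and your uniqueness argument (testing against the positivity set of each component, which lies in $\sigma(Z)$) is spelled out more fully than in the paper, which only gestures at it. One shared minor imprecision: both arguments apply a Radon--Nikod\'ym theorem stated for (non-negative) probability measures to a signed $\nu$; a decomposition $X=X^{+}-X^{-}$ fixes this in either version.
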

\begin{proof}
 The existence of conditional expectation $\CE{X}{Z}$ is provided by the Radon-Nikod\'{y}m theorem~\ref{lem:Radon-Nikodym} for a measure 
 $\nu(B) = \int_{Z^{-1}(B)} X(\omega) \P(\D{\omega})$ for $B\in\sB_\sY$
 which is absolutely continuous w.r.t.\ a measure $\mu=\P_Z$. Indeed, $\P_Z(B)=0$ for $B\in\sB$ implies
 \begin{align*}
 \nu(B)=\int_{Z^{-1}(B)} X(\omega)\P(\D{\omega}) = \int_{Z^{-1}(B)} X(\omega)\P_Z[Z(\D{\omega})] = 0.
 \end{align*}
 The required formula \eqref{eq:CE_Y} for scalar valued $X$ is given by the change of variable formula in integral in Lemma~\ref{lem:change_of_var}, i.e.
 $\nu(B) = \int_{B} \CE{X}{Z}(y) \P_Z(\D{y}) = \int_{Z^{-1}(B)} \CE{X}{Z}\circ Z(\omega) \P(\D{\omega})$.
 For a vector valued random variable $\tilde{X}$, the conditional expectation exists for the random variable $X(\omega)=\scal{\tilde{X}(\omega)}{v}_{\sX}$, where $v$ is an arbitrary element from $\sX$.
 The rest can be shown using linearity of integrals and of the conditional expectation.
\end{proof}
\begin{lemma}
 The conditional expectation is equivalent to the problem:\\Find $\CE{X}{Z}\in \sL^1(\sY,\sB,\P_Z;\sX)$ such that the following orthogonality condition holds
 \begin{align}
 \label{eq:orthogonality_dual}
 \dual{X-\CE{X}{Z}\circ Z}{W\circ Z}_{\sL^1\times \sL^\infty} &= 0
 \quad\forall W\in \sL^\infty(\sY,\sB_\sY,\P_Z;\sX).
 \end{align}
\end{lemma}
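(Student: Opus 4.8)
The plan is to show that both problems are characterised by the same family of test functions, so that a single identity—obtained by testing against vector-valued indicators—yields both implications at once. First I would unfold the duality pairing, writing the orthogonality condition \eqref{eq:orthogonality_dual} as $\int_\Omega \scal{X(\omega)-\CE{X}{Z}\circ Z(\omega)}{W\circ Z(\omega)}_\sX\,\P(\D{\omega})=0$ for every $W\in\sL^\infty(\sY,\sB,\P_Z;\sX)$. The key structural observation is that, by the definition \eqref{eq:sigmaY}, every $A\in\sigma(Z)$ has the form $A=Z^{-1}(B)$ with $B\in\sB$, and then $\ch{A}=\ch{B}\circ Z$; conversely each $B\in\sB$ yields such an $A$. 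Thus the $\sigma(Z)$-measurable indicator functions appearing in \eqref{eq:CE_Y} are exactly the compositions $\ch{B}\circ Z$, which is what links the two formulations.

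For the implication from \eqref{eq:CE_Y} to \eqref{eq:orthogonality_dual}, I would first test against the elementary functions $W=v\,\ch{B}$ for a fixed $v\in\sX$ and $B\in\sB$. For such $W$ the integrand becomes $\scal{X-\CE{X}{Z}\circ Z}{v}_\sX\,\ch{B}\circ Z$, and since $\ch{B}\circ Z=\ch{A}$ with $A=Z^{-1}(B)$, the orthogonality reduces to $\scal{\int_A (X-\CE{X}{Z}\circ Z)\,\P(\D{\omega})}{v}_\sX=0$, which is precisely the defining identity \eqref{eq:CE_Y} paired with $v$. Linearity then extends this to all simple functions $W=\sum_k v_k\ch{B_k}$. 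To reach a general $W\in\sL^\infty$ I would use that bounded measurable functions are uniform limits (up to null sets) of such simple functions, together with the continuity estimate $|\dual{f}{g}_{\sL^1\times\sL^\infty}|\le\|f\|_{\sL^1}\|g\|_{\sL^\infty}$ applied to $f=X-\CE{X}{Z}\circ Z\in\sL^1$; this passes the vanishing of the pairing to the limit.

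The converse is the same computation read backwards: specialising \eqref{eq:orthogonality_dual} to $W=v\,\ch{B}$ gives $\scal{\int_{Z^{-1}(B)}(X-\CE{X}{Z}\circ Z)\,\P(\D{\omega})}{v}_\sX=0$ for every $v\in\sX$, hence the vector integral itself vanishes, and since $B\in\sB$ ranges over all preimages this recovers \eqref{eq:CE_Y} for every $A\in\sigma(Z)$. I expect the only genuine technical step to be the density and continuity argument extending the identity from simple test functions to all of $\sL^\infty$; the reduction to the scalar case through a fixed $v\in\sX$ and the measurability bookkeeping for $\sigma(Z)$ are routine. Existence and $\P_Z$-almost everywhere uniqueness are already supplied by the preceding lemma, so nothing further is required on that point.
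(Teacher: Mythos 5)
Your proposal is correct and follows essentially the same route as the paper: test the orthogonality condition with $W=v\,\ch{B}$ so that $W\circ Z=v\,\ch{A}$ for $A=Z^{-1}(B)\in\sigma(Z)$, identify this with the defining identity \eqref{eq:CE_Y} paired against $v$, and then extend to all of $\sL^\infty$ by linearity and density of simple functions. You are in fact somewhat more careful than the paper's own proof, which only spells out one direction and omits the continuity estimate $|\dual{f}{g}_{\sL^1\times\sL^\infty}|\le\norm{f}{\sL^1}\norm{g}{\sL^\infty}$ that justifies passing the vanishing pairing to the limit.
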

\begin{proof}Starting from the definition of CE \eqref{eq:CE_Y}, the integrals over the domain $A\in\sigma(Z)$ can be rewritten with the help of characteristic function
 $\ch{A}(\omega) = \ch{Z(A)}\circ Z(\omega)$ as
 \begin{align*}
 \int_{\Omega} [X(\omega)-\CE{X}{Z}\circ Z(\omega)] \ch{Z(A)}\circ Z(\omega) \P(\D{\omega}) &= \V{0} \in\sX
 \end{align*}
 Since the values of the integral are in $\sX$, the scalar product has to be used with the help of an arbitrary vector $v\in\sX$ to obtain
 \begin{align*}
 \int_{\Omega} [X(\omega)-\CE{X}{Z}\circ Z(\omega)] \cdot v\ch{Z(A)}\circ Z(\omega) \P(\D{\omega}) = \dual{X-\CE{X}{Z}\circ Z}{v\ch{A}\circ Z}_{\sL^1\times \sL^\infty} &= 0
 \end{align*}
 the duality with the function $v\ch{A}\circ Z \in \sL^\infty(\Omega,\sigma(Z), \P;\sX)$. The final orthogonality condition \eqref{eq:orthogonality_dual} can be obtained by linearity and density of simple functions in $\sL^\infty(\Omega,\sB,\P_Z;\sX)$.
\end{proof}
\begin{remark}
 For $X\in \sL^2(\Omega,\F{S},\P;\sX)$ the conditional expectation $\CE{X}{Z}$ is from a smaller space $\sL^2(\sY,\sB_\sY,\P_Z;\sX)$ and the orthogonality condition changes from the duality into the scalar product
 \begin{align*}
 \scal{X-\CE{X}{Z}\circ Z}{W\circ Z}_{\sL^2} &= 0
 \quad\forall W\in \sL^2(\sY,\sB_\sY,\P_Z;\sX).
 \end{align*}
 Moreover the conditional expectation can be formulated variationally as a minimisation problem
 \begin{align*}
 \CE{X}{Z} &= \argmin_{W\in \sL^2(\sY,\sB,\P_Z;\sX)} \|X-W\circ Z\|_{\sL^2(\Omega,\F{S},\P;\sX)}^2.
 \end{align*}
\end{remark}

\subsection{Conditional probability and distribution}
\label{sec:conditional-probability-and-distribution}

Here we discuss the connection between conditional expectation, conditional probability, and its density. In the previous section we introduced the conditional expectation $\CE{X}{Z}$ of a random variable $X$ w.r.t.\ a measurement operator $Z$ as an element of $\sL^1(\sY,\sB_\sY,\P_Z;\sX)$. 
Let us first have a look on the conditional expectation of a characteristic function $\ch{M}$ of a set $M\in\F{S}$, which satisfies
\begin{align}
\label{eq:CE_prob}
\int_A \CE{\ch{M}}{Z}\circ Z(\omega)\P(\D{\omega}) &\stackrel{\eqref{eq:CE_Y}}{=} \int_A \ch{M}(\omega)\P(\D{\omega}) =\P(A\cap M)
\quad\forall A\in\sigma(Z), M\in\F{S}.
\end{align}

Similar to the expectation of characteristic functions $\E{\ch{M}} = \P(M)$ that equals to probability, the \emph{conditional probability function} is defined as a conditional expectation of a corresponding characteristic function
\begin{align*}
\P[M|Z] &= \CE{\ch{M}}{Z} \in \sL^1(\sY,\sB,\P_Z;\sR)
\end{align*}
i.e.\ a class of equivalent functions from $\sL^1(\sY,\sB,\P_Z;\sR)$. 
A suitable element of this class is called  a \emph{regular conditional probability} if
\begin{enumerate}
 \item $\P[M|Z]\circ Z(\cdot):\Omega\rightarrow[0,1]$ is a measurable function for each $M\in\F{S}$,
 \item $\P[\cdot|Z](y):\F{S}\rightarrow[0,1]$ is a probability measure for $y\in Z[\Omega\setminus \Gamma]$ such that $\P(\Gamma)=0$.
\end{enumerate}
Unfortunately, the conditional probability function $\P[M|Z]$ may fail to be a regular conditional probability \cite[Proposition~3.2.2]{Rao2006book}.
Better situation occurs for a \emph{conditional probability distribution} as a push-forward (an image) conditional probability
\begin{align}
\label{eq:CP_dist}
\P_\Q[A|Z] &= \P[\Q^{-1}(A)|Z] = \CE{\ch{A}\circ \Q}{Z} \quad\forall A\in\sB_\sQ
\end{align}
which always exists with properties (i) and (ii) when $\sQ$ (range of $\Q$) is a finite dimensional space \cite[Theorem~3.2.5]{Rao2006book}. It means that $\P_\Q[\cdot|Z](\cy):\sB_\sQ\rightarrow[0,1]$ is the conditional probability measure on the parameter space $\sQ$.

\begin{proposition}[Connection of conditional probabilities to probability densities]
 \label{lem:CP_dens}
 Let $\Q\in \sL^1(\Omega,\F{S},\P;\sQ)$ be a parameter RV and $Z\in \sL^1(\Omega,\F{S},\P;\sY)$ be a measurement RV, and $\P_{\Q,Z}$ be a probability measure on $\sB_\sQ\times\sB_\sY$ defined as $\P_{\Q,Z}(A\times B) = \P(\Q^{-1}(A)\cap Z^{-1}(B))$.
 Let $\P_\Q[\cdot|Z]$ and $\P_\Q$ on $\sB_\sQ$ be the conditional distributional measure and push-forward measure of $\Q$, respectively.
 Assuming that there exist probability densities (Radon-Nikod\'{y}m theorem~\eqref{lem:Radon-Nikodym}) w.r.t.\ Lebesgue measure on $\sQ$, $\sY$, and $\sQ\times\sY$
 \begin{subequations}
  \label{eq:prob_dens}
  \begin{align}
  \P_\Q[A|Z](y) &= \int_A f_{\Q|Z}(\q|y)\D{\q}\quad\forall A\in\sB_\sQ, y \in \sY,
  \\
  \P_\Q(A) &= \int_A f_{\Q}(\q)\D{\q}\quad\forall A\in\sB_\sQ,
  \\
  \P_Z(B) &= \int_B f_{Z}(y)\D{y}\quad\forall B\in\sB_\sY,
  \\
  \P_{\Q,Z}(A\times B) &= \int_{A\times B} f_{\Q,Z}(\q,y)\D{\q}\D{y}\quad\forall A\in\sB_\sQ \text{ and } B\in\sB_\sY,
  \end{align}
 \end{subequations}
 then the conditional probabilities can be expressed in terms of probability densities
 \begin{align*}
 f_{\Q|Z}(\q|y) = \frac{f_{Z|\Q}(y|\q)f_\Q(\q)}{\int_\sQ f_{Z|\Q}(y|\q)f_\Q(\q)\D{\q}}.
 \end{align*}
\end{proposition}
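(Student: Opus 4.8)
The plan is to derive Bayes's rule for densities by identifying the joint density $f_{\Q,Z}$ as the common factor that links the two conditional densities to the marginals. The backbone is the characterising relation \eqref{eq:CE_prob} for conditional expectations of characteristic functions, which I would first rewrite as an identity between the conditional distribution $\P_\Q[\cdot|Z]$ and the joint measure $\P_{\Q,Z}$, and then push down to the level of densities.

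First I would establish the integral identity coupling the conditional distribution to the joint measure. Fix $A\in\sB_\sQ$ and $B\in\sB_\sY$. Since $\ch{A}\circ\Q = \ch{\Q^{-1}(A)}$, definition \eqref{eq:CP_dist} gives $\P_\Q[A|Z] = \CE{\ch{\Q^{-1}(A)}}{Z}$; applying the characterising property \eqref{eq:CE_prob} with the set $\Q^{-1}(A)\in\F{S}$ in the role of $M$ and the integration domain $Z^{-1}(B)\in\sigma(Z)$ yields
\[
\int_{Z^{-1}(B)} \P_\Q[A|Z]\circ Z(\omega)\,\P(\D{\omega}) = \P\bigl(\Q^{-1}(A)\cap Z^{-1}(B)\bigr) = \P_{\Q,Z}(A\times B).
\]
Rewriting the left-hand side via the change of variables formula (Lemma~\ref{lem:change_of_var}) as an integral over $B$ against the push-forward $\P_Z$ turns this into $\int_B \P_\Q[A|Z](y)\,\P_Z(\D{y}) = \P_{\Q,Z}(A\times B)$.

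Next I would insert the density representations from \eqref{eq:prob_dens}. Writing $\P_\Q[A|Z](y)=\int_A f_{\Q|Z}(\q|y)\D{\q}$, $\P_Z(\D{y})=f_Z(y)\D{y}$, and expanding $\P_{\Q,Z}(A\times B)$ by Fubini, the identity becomes
\[
\int_B\int_A f_{\Q|Z}(\q|y)\,f_Z(y)\,\D{\q}\,\D{y} = \int_B\int_A f_{\Q,Z}(\q,y)\,\D{\q}\,\D{y}.
\]
Because $A$ and $B$ range over all Borel sets, the integrands must agree Lebesgue-almost everywhere, giving the key factorisation $f_{\Q,Z}(\q,y)=f_{\Q|Z}(\q|y)\,f_Z(y)$. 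Running the same argument with the roles of $\Q$ and $Z$ interchanged gives $f_{\Q,Z}(\q,y)=f_{Z|\Q}(y|\q)\,f_\Q(\q)$, and marginalising the joint density over $\sQ$ gives $f_Z(y)=\int_\sQ f_{\Q,Z}(\q,y)\D{\q}=\int_\sQ f_{Z|\Q}(y|\q)\,f_\Q(\q)\D{\q}$. Combining the three relations, on the set where $f_Z(y)>0$,
\[
f_{\Q|Z}(\q|y) = \frac{f_{\Q,Z}(\q,y)}{f_Z(y)} = \frac{f_{Z|\Q}(y|\q)\,f_\Q(\q)}{\int_\sQ f_{Z|\Q}(y|\q)\,f_\Q(\q)\D{\q}},
\]
which is the asserted formula.

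I expect the one genuinely delicate point to be the passage from the integral identity to the pointwise equality of densities. This is the standard ``equal integrals over every measurable rectangle force equal integrands almost everywhere'' argument, but it rests on the rectangles forming a $\pi$-system together with the uniqueness clause of the Radon-Nikod\'{y}m theorem (or a monotone-class/Dynkin argument), rather than on any literal pointwise comparison. The remaining ingredients --- the change of variables, Fubini's theorem, and the marginalisation --- are routine under the standing hypothesis \eqref{eq:prob_dens} that all four densities exist, and the final division is legitimate on $\{f_Z>0\}$, a set of full $\P_Z$-measure.
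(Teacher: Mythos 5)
Your proposal is correct and follows essentially the same route as the paper's proof: both start from the characterising relation \eqref{eq:CE_prob} applied to $\ch{A}\circ\Q$ over $Z^{-1}(B)$, pass to an integral over $B$ via the change-of-variables lemma, insert the density representations to obtain $f_{\Q,Z}=f_{\Q|Z}f_Z$ almost everywhere, invoke the symmetric factorisation $f_{\Q,Z}=f_{Z|\Q}f_\Q$, and identify the normalising constant by marginalisation. Your explicit remark on why equality of integrals over all rectangles forces almost-everywhere equality of the integrands is a point the paper leaves implicit, but the argument is the same.
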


\begin{proof}
 The proof that is provided in \cite[Section~3.2, Proposition~7]{Rao2006book} for cumulative distribution functions is formulated here in terms of conditional probabilities.
 
 The definition of the joint probability $\P_{\Q\times Z}$ on $\sQ\times\sY$ can be rewritten using conditional expectation w.r.t.\ $Z$, for $A\in\sB_\sQ$ and $B\in\sB_\sY$
 \begin{align*}
 \P_{\Q,Z}(A \times B) &= \P(\Q^{-1}(A)\cap Z^{-1}(B))
 \\
 & \stackrel{\eqref{eq:CE_prob}}{=} \int_{Z^{-1}(B)} \CE{\ch{A}\circ \Q}{Z}\circ Z(\omega)\P(\D{\omega})
 \stackrel{\eqref{eq:change_of_var}}{=}
 \int_{B} \CE{\ch{A}\circ \Q}{Z}(y)\P_Z(\D{y})
 \end{align*}
 Using the conditional distribution \eqref{eq:CP_dist} and probability densities \eqref{eq:prob_dens}, one can further deduce
 \begin{align*}
 \int_{B} \CE{\ch{A}\circ \Q}{Z}(y)\P_Z(\D{y}) &=
 \int_{B} \P_\Q[A|Z](y)\P_Z(\D{y})
 = \int_{B} \P_\Q[A|Z](y)f_Z(y)\D{y}
 \\
 &
 = \int_{A\times B} f_{\Q|Z}(\q|y)f_Z(y)\D{\q}\D{y}.
 \end{align*}
 
 Using probability density also for $\P_{\Q,Z}$, we obtain the relation in PDF
 \begin{align*}
 \int_{A\times B} f_{\Q,Z}(\q,y) \D{\q}\D{y} &= \int_{A\times B} f_{\Q|Z}(\q|y)f_Z(y)\D{\q}\D{y}.
 \end{align*}
 for arbitrary sets $A\in\sB_\sQ$ and $B\in\sB_\sY$. Therefore, the similar relation holds for the arguments
 \begin{align*}
 f_{\Q,Z}(\q,y) &= f_{\Q|Z}(\q|y)f_Z(y)
 \quad \text{for almost all }(\q,y)\in\sQ\times\sY.
 \end{align*}
 Analogical result can be obtained with a change of random variables
 $$f_{\Q,Z}(\q,y) = f_{Z|\Q}(y|\q)f_\Q(\q),$$ from which the conditional probability density can be deduced
 \begin{align*}
 f_{\Q|Z}(\q|y) = \frac{f_{Z|\Q}(y|\q)f_\Q(\q)}{f_Z(\q)}.
 \end{align*}
 To finish the proof, we will show that $\int_{\sQ} f_{Z|\Q}(y|\q)f_\Q(\q)\D{\q} = f_Z(y)$. Indeed
 \begin{align*}
 \int_{\sQ\times B} f_{Z|\Q}(y|\q)f_\Q(\q)\D{\q}\D{y} &= \int_{\sQ\times B} f_{\Q,Z}(\q,y)\D{\q} = \P_{\Q,Z}(\sQ\times B) =\P(\Q^{-1}(\sQ)\cap Z^{-1}(B))
 \\
 &=\P(Z^{-1}(B)) 
 = \P_Z(B) = \int_Bf_Z(y)\D{y},
 \end{align*}
 which is valid for an arbitrary set $B$.
\end{proof}
Next lemma is specialised for a measurement with additive error which is described in more detail in Section~\ref{sec:model-problem}.
\begin{lemma}
 \label{lem:likelihood}
 Let the measurement be given with an additive error \eqref{eq:measurement}, i.e.\ $Z(\omega) = \Yq(\Q(\xi))+E(\theta)$ for $\omega=(\xi,\theta)\in \Xi\times\Theta=\Omega$ and parameters $\Q$ depends only on values $\xi\in\Xi$ .
 Then the likelihood is equal almost surely to
 \begin{align*}
 f_{Z|Q}(y|q) = f_E(y-\Yq(q)).
 \end{align*}
\end{lemma}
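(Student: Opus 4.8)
The plan is to reduce the claim to the density factorisation already supplied by Proposition~\ref{lem:CP_dens}, and then to compute the relevant joint density directly from the product structure of $\Omega=\Xi\times\Theta$. Recall from Proposition~\ref{lem:CP_dens} that the joint density factors as $f_{\Q,Z}(q,y)=f_{Z|\Q}(y|q)\,f_\Q(q)$, so it suffices to establish that $f_{\Q,Z}(q,y)=f_E(y-\Yq(q))\,f_\Q(q)$; the asserted likelihood then follows by dividing by $f_\Q(q)$ on the set where $f_\Q(q)>0$, which is precisely what the phrase ``almost surely'' in the statement refers to.

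First I would evaluate the joint measure $\P_{\Q,Z}(A\times B)=\P(\Q^{-1}(A)\cap \YE^{-1}(B))$ for arbitrary $A\in\sB_\sQ$ and $B\in\sB_\sY$. Writing $Y(\xi)=\Yq(\Q(\xi))$ and using that $\P=\P_\Xi\times\P_\Theta$ together with $\Q(\xi,\theta)=\Q(\xi)$ and $E(\xi,\theta)=E(\theta)$, Fubini's theorem gives
\begin{align*}
\P_{\Q,Z}(A\times B)=\int_\Xi \ch{A}(\Q(\xi))\Bigl[\int_\Theta \ch{B}\bigl(Y(\xi)+E(\theta)\bigr)\,\P_\Theta(\D{\theta})\Bigr]\P_\Xi(\D{\xi}).
\end{align*}
The key step is the inner integral: for fixed $\xi$ the shift $Y(\xi)$ is constant, so this integral equals $\P_E(B-Y(\xi))$, and the change of variables $e=y-Y(\xi)$ together with $\P_E(\D{e})=f_E(e)\,\D{e}$ turns it into $\int_B f_E(y-Y(\xi))\,\D{y}$.

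Next I would push the remaining $\Xi$-integral forward onto the parameter space $\sQ$ along $\Q$, using the change-of-variables formula~\eqref{eq:change_of_var} and $\P_\Q(\D{q})=f_\Q(q)\,\D{q}$; this is legitimate because the integrand depends on $\xi$ only through $\Q(\xi)$, and it replaces $Y(\xi)=\Yq(\Q(\xi))$ by $\Yq(q)$ to give
\begin{align*}
\P_{\Q,Z}(A\times B)=\int_{A\times B} f_E\bigl(y-\Yq(q)\bigr)\,f_\Q(q)\,\D{q}\D{y}.
\end{align*}
Comparing this with $\P_{\Q,Z}(A\times B)=\int_{A\times B} f_{\Q,Z}(q,y)\,\D{q}\D{y}$ and letting $A,B$ range over $\sB_\sQ$ and $\sB_\sY$, I conclude that $f_{\Q,Z}(q,y)=f_E(y-\Yq(q))\,f_\Q(q)$ for almost all $(q,y)$, which with the factorisation above yields the stated likelihood.

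The main obstacle I anticipate is not any single computation but the careful bookkeeping around the two successive changes of representation: justifying Fubini (immediate here, since we integrate indicators against probability measures), checking that the substitution $e=y-Y(\xi)$ and the push-forward along $\Q$ are both valid under the standing assumption that the densities $f_E$ and $f_\Q$ exist, and handling the division by $f_\Q(q)$ only where it is strictly positive. This last point is exactly why the identity is asserted $\P_\Q$-almost everywhere rather than pointwise.
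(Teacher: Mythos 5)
Your proposal is correct and follows essentially the same route as the paper's own proof: both compute $\P(\Q^{-1}(A)\cap Z^{-1}(B))$ by exploiting the product structure $\P=\P_\Xi\times\P_\Theta$, shift the inner error integral to obtain $\int_B f_E(y-Y(\xi))\D{y}$, push forward along $\Q$ to reach $\int_{A\times B}f_E(y-\Yq(q))f_\Q(q)\D{q}\D{y}$, and conclude by comparison with the factorisation $f_{\Q,Z}=f_{Z|\Q}f_\Q$ from Proposition~\ref{lem:CP_dens}. The only cosmetic difference is that you phrase the decomposition via indicator functions and Fubini where the paper writes out the intersection of preimages as a set; your remark about dividing by $f_\Q$ only where it is positive makes the ``almost surely'' qualifier slightly more explicit than the paper does.
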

\begin{proof}
 We start from the conditional probability distribution
 \begin{align*}
 \P_{\YE}(B|Q)(q) &= \CE{\ch{B}\circ Z}{Q}(q)\quad\text{for }B\in\sB_\sY,
 \end{align*}
 which is defined as
 \begin{align*}
 \int_{Q^{-1}(A)} \CE{\ch{B}\circ Z}{Q}\circ Q (\omega)\P(\D{\omega}) &= \int_{Q^{-1}(A)} \ch{B}\circ Z(\omega) \P(\D{\omega})
 \quad\forall A\in \sB_\sQ.
 \end{align*}
 In fact the integral over the intersection of preimages $Q^{-1}(A)\cap \YE^{-1}(B)$ is needed. We will use that $Q$ does not depend on the whole domain $\Omega=\Xi\times\Theta$ but only on $\xi\in\Xi$ and error $E$ only on $\theta\in\Theta$. Particularly the preimages can be expressed as
 \begin{align*}
 Q^{-1}(A) &= \{(\xi,\theta) | Q(\xi)\in A \wedge \theta\in\Theta\}
 \\
 \YE^{-1}(B) &= \{(\xi,\theta) | Y_Q(Q(\xi))+E(\theta) \in B \} = \{(\xi,\theta) | \xi\in\Xi \wedge E(\theta) \in B - Y_Q(Q(\xi)) \}
 \end{align*}
 which allows as to express their intersection as
 \begin{align*}
 Q^{-1}(A)\cap \YE^{-1}(B) &= \{ (\xi,\theta) \in \Omega=\Xi\times\Theta \sep  Q(\xi)\in A \wedge E(\theta) \in B - Y_Q(Q(\xi)) \}
 .
 \end{align*}
 With the help of the change of variable formula in the integral (Lemma~\ref{lem:change_of_var}), the conditional probability distribution is recalculated with maps $\Q$ and $E$ as
 \begin{align*}
 \int_{Q^{-1}(A)} \ch{B}\circ Z(\omega) \P(\D{\omega}) &= \int_{Q^{-1}(A)\cap \YE^{-1}(B)} \P_\Xi(\D{\xi})\times \P_\Theta(\D{\theta})
 \\
 &= \int_{A} \left( \int_{B-Y_Q(q)} \P_E(\D{y})\right)\P_\Q(\D{\q})
 \\
 &= \int_{A} \left( \int_{B} f_E(y-Y_Q(q)) \D{y} \right) \P_\Q(\D{\q})
 \\
 &= \int_{A\times B} f_E(y-Y_Q(q)) f_\Q(q) \D{\q} \D{y}.
 \end{align*}
 Following the proof of the previous Proposition~\ref{lem:CP_dens}, the conditional expectation also equals
 \begin{align*}
 \int_{\Q^{-1}(A)} \ch{B}\circ Z(\omega) \P(\D{\omega}) = \int_{A\times B} f_{\YE|Q}(y|q) f_\Q(q) \D{\q} \D{y}
 \end{align*}
 from which the required equality comes out as it holds for arbitrary sets $A\in\sB_\sQ$ and $B\in\sB_\sY$.
\end{proof}
\section{Auxiliary lemmata}
\begin{theorem}[Radon-Nikod\'{y}m theorem]
 \label{lem:Radon-Nikodym}
 Let $\nu$ and $\mu$ be two probability measures on $(\Omega,\F{S})$ such that $\mu$ is absolutely continuous w.r.t $\nu$ ($\nu\ll\mu$ if $\nu(A)=0$ for all $A\in\F{S}$ with $\mu(A)=0$)
 Then there exist a unique non-negative function (called probability density) $f\in \sL^1(\Omega,\F{S},\mu;\sR)$ such that
 \begin{align*}
 \nu(A) = \int_{A}f(\omega)\mu(\D{\omega}).
 \end{align*}
\end{theorem}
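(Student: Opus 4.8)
The plan is to establish existence by von Neumann's Hilbert-space argument, which is particularly clean here because both $\mu$ and $\nu$ are finite (probability) measures, so no $\sigma$-finite exhaustion is needed. First I would introduce the finite reference measure $\varrho := \mu + \nu$ and work in the Hilbert space $\sL^2(\Omega,\F{S},\varrho;\sR)$. The linear functional $g \mapsto \int_\Omega g\,\D{\nu}$ is bounded on this space, since by Cauchy--Schwarz $\bigl|\int_\Omega g\,\D{\nu}\bigr| \le \int_\Omega |g|\,\D{\varrho} \le \sqrt{\varrho(\Omega)}\,\|g\|_{\sL^2(\varrho)}$. The Riesz representation theorem then furnishes a (unique) $h\in\sL^2(\Omega,\F{S},\varrho;\sR)$ with
\begin{align*}
\int_\Omega g\,\D{\nu} = \int_\Omega g\,h\,\D{\varrho}\qquad\text{for all } g\in\sL^2(\varrho).
\end{align*}

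Next I would pin down the range of $h$ by testing with characteristic functions. Choosing $g=\ch{\{h<0\}}$ gives $\nu(\{h<0\})=\int_{\{h<0\}} h\,\D{\varrho}\le 0$, forcing $\varrho(\{h<0\})=0$, so $h\ge 0$ $\varrho$-a.e. Choosing $g=\ch{\{h\ge 1\}}$ gives $\nu(\{h\ge1\})=\int_{\{h\ge1\}} h\,\D{\varrho}\ge\varrho(\{h\ge1\})=\mu(\{h\ge1\})+\nu(\{h\ge1\})$, which forces $\mu(\{h\ge1\})=0$. This is exactly where the hypothesis $\nu\ll\mu$ enters: from $\mu(\{h\ge1\})=0$ absolute continuity yields $\nu(\{h\ge1\})=0$, hence $\varrho(\{h\ge1\})=0$ and $0\le h<1$ $\varrho$-a.e. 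I expect the bound $h<1$ to be the \emph{main obstacle}, since it is the one place where absolute continuity is indispensable; without it the set $\{h=1\}$ could carry positive $\varrho$-mass, corresponding precisely to the singular part in the Lebesgue decomposition.

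With $0\le h<1$ established I would set the candidate density $f := h/(1-h)$, which is nonnegative and finite $\varrho$-a.e. (hence $\mu$-a.e.). To verify $\nu(A)=\int_A f\,\D{\mu}$, I would test the Riesz identity with the telescoping choice $g=(1+h+\dots+h^n)\,\ch{A}$ (bounded, so admissible) and split $\varrho=\mu+\nu$ to rearrange the resulting identity into $\int_A (1-h^{n+1})\,\D{\nu} = \int_A h(1+h+\dots+h^n)\,\D{\mu}$. Letting $n\to\infty$, monotone convergence applies on both sides because $0\le h<1$: the left side increases to $\nu(A)$ while $h(1+\dots+h^n)$ increases to $h/(1-h)=f$, giving $\nu(A)=\int_A f\,\D{\mu}$ for every $A\in\F{S}$. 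Taking $A=\Omega$ shows $\int_\Omega f\,\D{\mu}=\nu(\Omega)=1<\infty$, so $f\in\sL^1(\Omega,\F{S},\mu;\sR)$ as claimed.

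Finally, uniqueness is routine: if $f$ and $\tilde f$ both represent $\nu$ then $\int_A (f-\tilde f)\,\D{\mu}=0$ for all $A\in\F{S}$, and testing on $A=\{f>\tilde f\}$ and $A=\{f<\tilde f\}$ gives $f=\tilde f$ $\mu$-a.e. Thus the entire proof reduces to the Riesz representation step, the $\varrho$-a.e.\ bounds on $h$, and a single monotone-convergence rearrangement.
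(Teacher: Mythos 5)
Your proof is correct and complete, and it is precisely von Neumann's Hilbert-space argument that the paper itself invokes by citing Rudin's \emph{Real and Complex Analysis} rather than giving a proof of its own: the bounded functional on $\sL^2(\mu+\nu)$, the Riesz representative $h$ with $0\le h<1$ a.e., and the geometric-series rearrangement yielding $f=h/(1-h)$ all match the cited proof. The only point worth flagging is that the theorem's prose (``$\mu$ is absolutely continuous w.r.t.\ $\nu$'') contains a typo; you correctly worked with the direction $\nu\ll\mu$ defined in the parenthetical, which is exactly what your step $\mu(\{h\ge1\})=0\Rightarrow\nu(\{h\ge1\})=0$ requires.
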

\begin{proof}
 See in \cite[Theorem~6.9]{Rudin1986real}.
\end{proof}
\begin{lemma}[Change of the variable formula in the integral]
 \label{lem:change_of_var}
 Let $Y\in\sL^1(\Omega,\F{S},\P;\sY)$ and $X\in\sL^1(\sY,\sB,\P_Y;\sX)$ be two measurable functions and $\P_Y:\sB\rightarrow[0,1]$ be a push-forward measure defined as $\P_Y(B):=\P(Y^{-1}(B))$ for $B\in\sB$.
 Then for all $A\in\F{S}$ the change of variable formula holds
 \begin{align}
 \label{eq:change_of_var}
 \int_{A} X\circ Y(\omega)\P(\D{\omega}) = \int_{Y(A)} X(y)\P_Y(\D{y}).
 \end{align}
 if either side exists.
\end{lemma}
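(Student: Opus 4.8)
The plan is to prove \eqref{eq:change_of_var} by the standard measure-theoretic induction (the ``standard machine''): first establish the identity for characteristic functions on the target space $\sY$, then extend it by linearity to simple functions, then to arbitrary non-negative measurable functions by monotone approximation, and finally to the integrable and vector-valued case by the usual decomposition. Throughout I would read $A$ as an element of the sub-$\sigma$-algebra $\sigma(Y)=Y^{-1}(\sB)$ generated by $Y$, that is $A=Y^{-1}(B_0)$ for some $B_0\in\sB$; this is the only situation in which the right-hand side is meaningful and is exactly the form in which the lemma is applied (always with $A=Z^{-1}(B)$).

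For the base case, fix $B\in\sB$ and take $X=\ch{B}$. Then $X\circ Y=\ch{Y^{-1}(B)}$, so the left-hand side of \eqref{eq:change_of_var} equals $\P(A\cap Y^{-1}(B))$. Writing $A=Y^{-1}(B_0)$ gives $A\cap Y^{-1}(B)=Y^{-1}(B_0\cap B)$, and by the definition of the push-forward measure this equals $\P_Y(B_0\cap B)=\int_{B_0}\ch{B}(y)\,\P_Y(\D{y})$, which is the right-hand side of \eqref{eq:change_of_var} with $B_0$ in place of the forward image $Y(A)$ (the two agree for integration against $\P_Y$; see the final paragraph). Linearity of both integrals then promotes the identity to non-negative simple functions $X=\sum_k c_k\ch{B_k}$.

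For a general non-negative measurable $X:\sY\rightarrow[0,\infty]$, choose an increasing sequence of simple functions $X_n\uparrow X$. Then $X_n\circ Y\uparrow X\circ Y$ pointwise on $\Omega$, and two applications of the monotone convergence theorem — once on $(\Omega,\F{S},\P)$ and once on $(\sY,\sB,\P_Y)$ — pass the simple-function identity to the limit, giving \eqref{eq:change_of_var} for every non-negative $X$ with both sides in $[0,\infty]$. Applying this to $|X|$ yields $\|X\circ Y\|_{\sL^1(\Omega;\sR)}=\|X\|_{\sL^1(\sY,\sB,\P_Y;\sR)}$, so either integral is finite exactly when the other is, which is precisely the ``if either side exists'' clause. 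For scalar integrable $X$ I would split $X=X^+-X^-$ and subtract the two non-negative identities, and for finite-dimensional $\sX$ reduce to the scalar case by applying the result to each component $\scal{X}{v}_\sX$ and invoking linearity, as elsewhere in the paper.

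The only genuine subtlety — and the step that needs care — is the \emph{forward} image $Y(A)$ on the right-hand side. For arbitrary $A\in\F{S}$ it need not be $\sB$-measurable, and the naive identity even fails: taking $X\equiv1$, the left side is $\P(A)$ while $\P_Y(Y(A))=\P(Y^{-1}(Y(A)))$ can be strictly larger, since $A\subsetneq Y^{-1}(Y(A))$ in general. The formula is correct exactly when $A\in\sigma(Y)$, i.e.\ $A=Y^{-1}(B_0)$: then $\int_{Y(A)}X\,\P_Y(\D{y})$ is to be read as $\int_{B_0}X\,\P_Y(\D{y})$ for any $B_0\in\sB$ with $Y^{-1}(B_0)=A$, and this value is independent of the chosen $B_0$ because $\P_Y(B_0\triangle B_0')=\P(Y^{-1}(B_0)\triangle Y^{-1}(B_0'))=0$ whenever $Y^{-1}(B_0)=Y^{-1}(B_0')$. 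Accordingly I would record the identity in the unambiguous form $\int_{Y^{-1}(B)}X\circ Y\,\P(\D{\omega})=\int_{B}X(y)\,\P_Y(\D{y})$ for $B\in\sB$, which is exactly what every application of \eqref{eq:change_of_var} in the paper uses.
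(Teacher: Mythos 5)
Your proof is correct, but the comparison here is lopsided: the paper does not actually prove this lemma --- its ``proof'' is the single line \emph{See in \cite{Rao2006book}, Section~1.4, Theorem~1}. Your standard-machine argument (indicators, then simple functions by linearity, then non-negative $X$ by two applications of monotone convergence, then $X=X^+-X^-$ and componentwise reduction $\scal{X}{v}_\sX$ for finite-dimensional $\sX$) is exactly the textbook proof of the cited image-measure theorem, and each step is sound: the base case $\P\bigl(Y^{-1}(B_0)\cap Y^{-1}(B)\bigr)=\P_Y(B_0\cap B)$, the limit passage, and the norm identity for $|X|$ that accounts for the ``if either side exists'' clause. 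What your last paragraph adds is a genuine sharpening that the citation glosses over: as literally stated, with arbitrary $A\in\F{S}$ and the forward image $Y(A)$ on the right, the formula is false --- your counterexample $X\equiv 1$ with $A\subsetneq Y^{-1}(Y(A))$ of strictly smaller measure is valid, and moreover $Y(A)$ need not even belong to $\sB$. Your repair --- restrict to $A\in\sigma(Y)$, read $\int_{Y(A)}$ as $\int_{B_0}$ for any $B_0\in\sB$ with $Y^{-1}(B_0)=A$, and check well-definedness via $\P_Y(B_0\,\triangle\,B_0')=\P\bigl(Y^{-1}(B_0)\,\triangle\,Y^{-1}(B_0')\bigr)=0$ --- is the correct reading, and it matches every actual use of \eqref{eq:change_of_var} in the paper (in the existence proof for the conditional expectation and in Lemma~\ref{lem:likelihood}, the lemma is always invoked with $A=Z^{-1}(B)$). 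In short: where the paper defers to the literature, you supply a complete self-contained proof and quietly correct an imprecision in the statement; the only thing the paper's approach buys is brevity.
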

\begin{proof}See in \cite[Section~1.4, Theorem~1]{Rao2006book}.
\end{proof}

\section*{Acknowledgement}
Funded by the Deutsche Forschungsgemeinschaft (DFG, German Research Foundation) - project numbers MA2236/21-1; MA2236/26-1; and MA2236/30-1.


\end{document}